\newtheorem{thm}{Theorem}[section]
\newtheorem*{thm*}{Theorem}
\newtheorem{prop}[thm]{Proposition}
\newtheorem{lem}[thm]{Lemma}
\newtheorem{cor}[thm]{Corollary}
\theoremstyle{definition}
\newtheorem{definition}[thm]{Definition}
\theoremstyle{remark}
\newtheorem{remark}[thm]{Remark}
\numberwithin{equation}{section}
\title{Area Statistics for Large Oscillating Tableaux}
\author{David Keating}
\address{D.K.: Department of Mathematics, University of California, Berkeley,
CA 94720, USA}
\email{dkeating@berkeley.edu}
\begin{document}
\begin{abstract}
In this note we show that the area of the partitions making up an oscillating tableaux is described by a random walk on the first quadrant of $\mathbb{Z}^2$ with certain position dependent weights.  We are able to recursively calculate the moments of the walk. As the length of the oscillating tableaux becomes large we show that this random walk converges to a Gaussian stochastic process.
\end{abstract}
\maketitle
\tableofcontents

Oscillating tableaux can be viewed as random walks on Young's lattice of fixed length, beginning at the empty partition and ending at some partition $\lambda$. Here we show that the area of the partitions making up a tableaux can themselves be view as a random walk in the first quadrant of $\mathbb{Z}^2$ with certain position dependent weights we will describe. In section \ref{sec:1}, we review some background in order to motivate what will follow. In section \ref{sec:2}, we study the random walk on $\mathbb{Z}^2$. We show that it is possible to recursively solve for the moments of the walk. In sections \ref{sec:3} and \ref{sec:4}, we look at the limiting behavior of the walk as the length becomes large. 

\section{Preliminaries} \label{sec:1}

\begin{definition}
An oscillating tableaux of length $N$ and shape $\lambda$ is a sequence of partitions
\[
\{\lambda^{(0)} = \emptyset,\lambda^{(1)},\ldots,\lambda^{(N-1)},\lambda^{(N)}=\lambda\}
\]
such that either $|\lambda^{(i+1)}|/|\lambda^{(i)}| = 1$ or  $|\lambda^{(i)}|/|\lambda^{(i+1)}| = 1$ for all $i<N$. Viewing the partitions as their Young diagrams, this means each subsequent partition differs from the previous partition by adding or removing a single corner box. See figure \ref{fig:OTex}.
\end{definition}

\begin{figure}[h]
\begin{center}
\[
\left\{ \lambda^{(0)}=\emptyset, 
\lambda^{(1)}=
\begin{tikzpicture}[baseline=-7]
\draw (0,0)--(0.2,0)--(0.2,-0.2)--(0,-0.2)--(0,0);
\end{tikzpicture},
\lambda^{(2)}=
\begin{tikzpicture}[baseline=-7]
\draw (0,0)--(0.2,0)--(0.2,-0.4)--(0,-0.4)--(0,0); \draw (0,-0.2)--(0.2,-0.2);
\end{tikzpicture},
\lambda^{(3)}=
\begin{tikzpicture}[baseline=-7]
\draw (0,0)--(0.4,0)--(0.4,-0.2)--(0.2,-0.2)--(0.2,-0.4)--(0,-0.4)--(0,0); \draw (0.2,0)--(0.2,-0.2);  \draw (0,-0.2)--(0.2,-0.2);
\end{tikzpicture},
\lambda^{(4)}=
\begin{tikzpicture}[baseline=-7]
\draw (0,0)--(0.4,0)--(0.4,-0.2)--(0,-0.2)--(0,0); \draw (0.2,0)--(0.2,-0.2);
\end{tikzpicture},
\lambda^{(5)}=
\begin{tikzpicture}[baseline=-7]
\draw (0,0)--(0.4,0)--(0.4,-0.2)--(0.2,-0.2)--(0.2,-0.4)--(0,-0.4)--(0,0); \draw (0.2,0)--(0.2,-0.2);  \draw (0,-0.2)--(0.2,-0.2);
\end{tikzpicture}
\right\}
\]
\end{center}
\caption{An example of an oscillating tableaux of shape $\lambda = (2,1)$ and length $N=5$.}\label{fig:OTex}
\end{figure}

We denote the set of all oscillating tableaux of length $N$ and shape $\lambda$ by $\mathcal{OT}(\lambda,N)$. It is a well-known fact that the number of such tableaux for fixed shape and length has a very simple formula. 
\begin{thm}\label{thm:enum}
Let $\lambda$ be a partition with $|\lambda| = k$. Then for all $n\in  \mathbb{N}$ let $N=k+2n$, we have
\[
\# \mathcal{OT}(\lambda,N) = \binom{N}{k} (N-k-1)!! f^\lambda
\]
 where $f^\lambda$ is the number of standard Young tableaux of shape $\lambda$. Further, $\# \mathcal{OT}(\lambda,l) = 0$ if $l \ne k+2n$ for some $n\in\mathbb{N}$.
\end{thm}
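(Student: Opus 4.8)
The plan is to realize $\#\mathcal{OT}(\lambda,N)$ as a matrix coefficient of the operator $(U+D)^N$ acting on the group algebra of Young's lattice, and to evaluate this coefficient using the differential-poset commutation relation. Let $V$ be the vector space with orthonormal basis indexed by all partitions, and let $U,D\colon V\to V$ be the up and down operators, $U\mu=\sum_{\nu\,\gtrdot\,\mu}\nu$ and $D\mu=\sum_{\nu\,\lessdot\,\mu}\nu$, the sums running over partitions covering, respectively covered by, $\mu$. By definition an oscillating tableau of length $N$ and shape $\lambda$ is exactly a monomial contributing to $(U+D)^N\emptyset$ that ends at $\lambda$, so $\#\mathcal{OT}(\lambda,N)=\langle\lambda,(U+D)^N\emptyset\rangle$.

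First I would record the key identity $DU-UD=I$ on $V$. This is the statement that Young's lattice is a $1$-differential poset: for any partition $\mu$ the coefficient of $\mu$ in $(DU-UD)\mu$ equals the number of addable corners of $\mu$ minus the number of removable corners of $\mu$, which is always $1$, while every off-diagonal contribution to $DU\mu$ is matched by an equal contribution to $UD\mu$. This takes a sentence or two to check. Since $[D,U]=I$ is central, the Baker–Campbell–Hausdorff formula (applied on each graded piece, or on the finite-dimensional $(U,D)$-stable subspaces that suffice here) gives $e^{t(U+D)}=e^{tU}e^{tD}e^{t^2/2}$.

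Next I would apply both sides to $\emptyset$ and pair with $\lambda$. Since $D\emptyset=0$ we have $e^{tD}\emptyset=\emptyset$, and since $U^m\emptyset=\sum_\nu(\#\{\text{saturated chains }\emptyset\to\nu\text{ of length }m\})\,\nu$, the $\lambda$-coefficient of $e^{tU}\emptyset$ is $\frac{t^{k}}{k!}f^\lambda$ (only $m=k$ contributes, and the number of saturated chains from $\emptyset$ to $\lambda$ is $f^\lambda$). Hence the exponential generating function is
\[
\sum_{N\ge 0}\frac{t^N}{N!}\,\#\mathcal{OT}(\lambda,N)=\langle\lambda,e^{t(U+D)}\emptyset\rangle=e^{t^2/2}\,\frac{t^{k}}{k!}\,f^\lambda=\frac{f^\lambda}{k!}\sum_{n\ge0}\frac{t^{k+2n}}{2^n n!}.
\]
Reading off the coefficient of $t^N$ shows at once that $\#\mathcal{OT}(\lambda,N)=0$ unless $N=k+2n$, and that in that case $\#\mathcal{OT}(\lambda,N)=\dfrac{N!}{k!\,2^n n!}\,f^\lambda$. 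Finally, using $(2n)!=2^n n!\,(2n-1)!!$, equivalently $\frac{1}{2^n n!}=\frac{(N-k-1)!!}{(N-k)!}$, this rearranges to $\binom{N}{k}(N-k-1)!!\,f^\lambda$, as claimed.

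The only genuinely delicate point is the passage from the commutation relation to the factorized exponential: one must either interpret everything as a formal power series in $t$ acting coefficientwise — legitimate because, applied to $\emptyset$, only finitely many partitions occur at each order in $t$ — or observe that the span of the $U^aD^b\emptyset$ of bounded total degree is finite-dimensional and $(U,D)$-stable. Everything else is bookkeeping. I would also remark that the shape of the answer — $\binom{N}{k}$ for the choice of the $k$ ``surviving'' steps, $(N-k-1)!!$ for a perfect matching of the remaining $2n$ steps, and $f^\lambda$ for a standard Young tableau — reflects Sundaram's bijection between oscillating tableaux and pairs consisting of a partial matching and a standard Young tableau, which furnishes an alternative, purely combinatorial proof.
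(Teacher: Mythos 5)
Your argument is correct, but it takes a genuinely different route from the paper's. You evaluate $\#\mathcal{OT}(\lambda,N)=\langle\lambda,(U+D)^N\emptyset\rangle$ via the differential-poset relation $DU-UD=I$ and the factorization $e^{t(U+D)}=e^{tU}e^{tD}e^{t^2/2}$; this is essentially the operator proof of Stanley and Roby, which the paper explicitly cites as one of the known proofs before choosing a different one. The paper instead sets $\tilde f^\lambda_N=\binom{N}{k}(N-k-1)!!f^\lambda$ and proves the recursion $\tilde f^\lambda_N=\sum_{\mu\subset\lambda}\tilde f^\mu_{N-1}+\sum_{\mu\supset\lambda}\tilde f^\mu_{N-1}$ directly from the two branching identities $f^\lambda=\sum_{\mu\subset\lambda}f^\mu$ and $(k+1)f^\lambda=\sum_{\mu\supset\lambda}f^\mu$, then inducts on $N$ by peeling off the last partition of the tableau. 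Your approach is arguably slicker for pure enumeration: the generating function $\frac{f^\lambda}{k!}t^k e^{t^2/2}$ delivers the closed form and the parity vanishing simultaneously, and your closing remark correctly identifies the combinatorial meaning of each factor via Sundaram's bijection. What it does not produce, however, is the by-product the paper is actually after: in the paper's computation the ratios $\tilde f^\mu_{N-1}/\tilde f^\lambda_N$ come out as $\frac{k}{N}\cdot\frac{f^\mu}{f^\lambda}$ and $\frac{N-k}{N}\cdot\frac{f^\mu}{(k+1)f^\lambda}$, which are precisely the transition probabilities $\frac{Y}{N-X}$ and $1-\frac{Y}{N-X}$ of the area random walk (and the hook-walk sampling algorithm) that the rest of the note studies. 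So your proof is a valid and standard alternative, but the paper's choice of the Greene--Nijenhuis--Wilf-style induction is deliberate and load-bearing for everything that follows.

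One minor point worth tightening if you write this up: the Baker--Campbell--Hausdorff step applied to the unbounded operators $U,D$ should be justified exactly as you indicate, by working with formal power series in $t$ and noting that $(U+D)^N\emptyset$ involves only partitions of size at most $N$, so every coefficient is a finite sum; with that said, the step is legitimate.
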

There are many proofs of this theorem, for examples see \cite{R,St1,St2,S}. Here we provide a proof in the style of \cite{GNW1,GNW2} in order to motivate what will follow.
\begin{proof}
As the second part of the theorem follows from a simple parity argument, we only consider the case when the number of oscillating tableaux is nonzero.

Let $\lambda$ be a partition with $|\lambda|=k$, and let $N = k+2n$ for some $n\in\mathbb{N}$. Let $\tilde f^\lambda_N =  \binom{N}{k} (N-k-1)!! f^\lambda$. 

Clearly the theorem is true when $N=0$. Fix a partition $\lambda$. Consider an oscillating tableaux in $\mathcal{OT}(\lambda,N)$. Restricting to the first $N$ partitions $\{ \lambda^{(0)} = \emptyset,\ldots,\lambda^{(N-1)}=\mu \}$ gives an oscillating tableaux in $\mathcal{OT}(\mu,N-1)$. In fact, every oscillating tableaux in $\mathcal{OT}(\lambda,N)$ can be constructed from an oscillating tableaux in $\mathcal{OT}(\mu,N-1)$ for some $\mu$ which differs from $\lambda$ by the addition or subtraction of a single box. Thus the theorem follows by an induction argument if we can show that
\[
\tilde f^\lambda_N = \sum_{\mu \subset \lambda} \tilde f^\mu_{N-1} +  \sum_{\mu \supset \lambda} \tilde f^\mu_{N-1}.
\]
Equivalently, we will verify that
\[
 \sum_{\mu \subset \lambda} \frac{\tilde f^\mu_{N-1}}{\tilde f^\lambda_N} +  \sum_{\mu \supset \lambda} \frac{\tilde f^\mu_{N-1}}{\tilde f^\lambda_N} = 1.
\]
From the definition of $\tilde f^\lambda_N$, the LHS becomes
\begin{align*}
& \sum_{\mu \subset \lambda} \frac{\binom{N-1}{k-1} (N-k-1)!!}{\binom{N}{k} (N-k-1)!!} \frac{f^\mu}{f^\lambda} + \sum_{\mu \supset \lambda} \frac{\binom{N-1}{k+1} (N-k-3)!!}{\binom{N}{k} (N-k-1)!!} \frac{ f^\mu}{ f^\lambda} \\
= & \frac{k}{N} \sum_{\mu \subset \lambda} \frac{ f^\mu}{ f^\lambda} + \frac{N-k}{N} \sum_{\mu \supset \lambda} \frac{1}{k+1} \frac{ f^\mu}{ f^\lambda} \\
= & \frac{k}{N} + \frac{N-k}{N} \\
= & 1
\end{align*}
where the second-to-last equality follows from the well-known identities $f^\lambda = \sum_{\mu \subset \lambda} f^{\mu}$ and $f^\lambda =\frac{1}{k+1} \sum_{\mu \supset \lambda} f^{\mu}$.
\end{proof}

Note that this is essentially the content of Lemma 2.2 in \cite{HX}.

\begin{remark}
This provides a nice algorithm for uniformly sampling $\mathcal{OT}(\lambda,N)$ that interpolates the two hook-walk algorithms developed in \cite{GNW1,GNW2}. 
\begin{enumerate}
\item Fix $N$ and $\lambda$ such that $|\lambda|=k$ and $N=k+2n$ for some $n\in\mathbb{N}$.
\item Set $\lambda^{(N)} = \lambda$. Set $X=0$. Set $Y=k$.
\item While $X < N$:
\begin{enumerate}
\item With probability $\frac{Y}{N-X}$, remove a corner from $\lambda^{(N-X)}$ using the hook-walk algorithm from \cite{GNW1}. Set the new partition to $\lambda^{(N-X-1)}$. Take $X\to X+1$, $Y\to Y-1$.
\item Otherwise, add a corner to $\lambda^{(N-X)}$ using the complementary hook-walk algorithm from \cite{GNW2}. Set the new partition to $\lambda^{(N-X-1)}$. Take $X\to X+1$, $Y\to Y+1$.
\end{enumerate}
\item Return the oscillating tableaux $\{\lambda^{(0)} = \emptyset,\lambda^{(1)},\ldots,\lambda^{(N-1)},\lambda^{(N)}=\lambda\}$.
\end{enumerate}
The fact that this algorithm samples uniformly follows directly from the induction argument used to prove (\ref{thm:enum}).
\end{remark}

\begin{remark}\label{rmk:AreaRW}
Note that the choice of the whether or not to add or remove a box from the partition at step $X$ is dependent only on $X$ and the area of the partition, not on any other details about the shape of the partition. This means that the area of the partitions in an oscillating tableaux can themselves be described by a random walk, independent of the details of the partitions' shape. We will focus on this random walk for the remainder of the note.
\end{remark}

\section{Area Random Walk} \label{sec:2}
In light of the observation in (\ref{rmk:AreaRW}) at the end of the previous section, consider the random walk on $\mathbb{Z}^2$ with elementary steps $(1,\pm1)$. Denote an instance of the random walk by $H:\mathbb{Z} \to \mathbb{Z}$, where $Y=H(X)$ is the height of the walk at step $X$. See figure \ref{fig:AreaEx}.

 Fix the length $N$ and starting point $H(0)=Y_0$, $0\le Y_0 \le N$. Then the position-dependent probabilities of the elementary steps are given by
\begin{equation}
\begin{aligned}
& \mathbb{P}[H(X+1)=Y+1 | H(X) = Y] = 1- \frac{Y}{N-X} = \frac{N-X-Y}{N-X} \\
& \mathbb{P}[H(X+1)=Y-1 | H(X) = Y] = \frac{Y}{N-X}.
\end{aligned}
\end{equation}
From the definition it is obvious this random walk is Markov. It is easy to check that $\mathbb{P}[H(N)=0]=1$. 

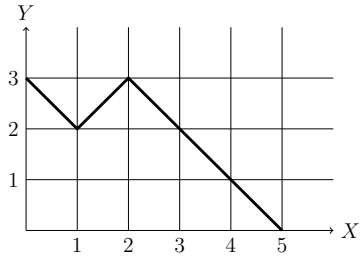
\begin{figure}
\begin{center}
\resizebox{5cm}{!}{
\begin{tikzpicture}
\draw[ultra thick] (0,3)--(1,2)--(2,3)--(3,2)--(4,1)--(5,0);
\draw[->] (0,0)--(0,4); \draw[->] (0,0)--(6,0);
\draw[thin] (1,0)--(1,4); \draw[thin] (2,0)--(2,4); \draw[thin] (3,0)--(3,4); \draw[thin] (4,0)--(4,4); \draw[thin] (5,0)--(5,4);
\draw[thin] (0,1)--(6,1); \draw[thin] (0,2)--(6,2); \draw[thin] (0,3)--(6,3); 
\node[left] at (0,1) {1}; \node[left] at (0,2) {2}; \node[left] at (0,3) {3};
\node[below] at (1,0) {1}; \node[below] at (2,0) {2}; \node[below] at (3,0) {3}; \node[below] at (4,0) {4}; \node[below] at (5,0) {5};
\node[right] at (6,0) {$X$}; \node[above] at (0,4) {$Y$};
\end{tikzpicture}
}
\end{center}
\caption{The corresponding instance of the area walk for the oscillating tableau given in fig. \ref{fig:OTex}.}\label{fig:AreaEx}
\end{figure}

Due to the Markov property of the random walk, it is easily to calculate the moments recursively as in the following proposition.
\begin{prop}\label{prop:recMoments}
Fix $N$ and $Y_0$. The $n^{th}$ moment of the random walk is given recursively by
\begin{equation}
\mathbb{E}[H(X+1)^n] = 1 + \sum_{k=1}^n \left( \binom{n}{k} - (1+(-1)^{n-k})\binom{n}{k-1} \frac{1}{N-X}\right)\mathbb{E}[H(X)^k]
\end{equation}
for $X\in\{0,\ldots,N-1\}$, with $\mathbb{E}[H(0)^n] = Y_0^n$.
\end{prop}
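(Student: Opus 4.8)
The plan is a direct one-step computation using the Markov property, followed by the law of total expectation. First I would condition on the value $H(X)=Y$ and use the transition probabilities to write
\[
\mathbb{E}\bigl[H(X+1)^n \mid H(X)=Y\bigr] = \frac{N-X-Y}{N-X}(Y+1)^n + \frac{Y}{N-X}(Y-1)^n.
\]
The key algebraic move is to split off $(Y+1)^n$ cleanly: rewrite the right-hand side as
\[
(Y+1)^n - \frac{Y}{N-X}\Bigl[(Y+1)^n - (Y-1)^n\Bigr].
\]

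Next I would expand both $(Y+1)^n$ and $(Y-1)^n$ by the binomial theorem. For the difference one gets $(Y+1)^n-(Y-1)^n = \sum_{k=0}^n \binom{n}{k}\bigl(1-(-1)^{n-k}\bigr)Y^k$; multiplying by $Y$ and reindexing $k\mapsto k-1$ turns this into $\sum_{k=1}^{n+1}\binom{n}{k-1}\bigl(1+(-1)^{n-k}\bigr)Y^k$, where the $k=n+1$ term carries the factor $1+(-1)^{-1}=0$ and therefore drops, leaving a sum up to $k=n$. Combining with $(Y+1)^n = 1 + \sum_{k=1}^n \binom{n}{k}Y^k$ yields
\[
\mathbb{E}\bigl[H(X+1)^n \mid H(X)=Y\bigr] = 1 + \sum_{k=1}^n \left(\binom{n}{k} - \bigl(1+(-1)^{n-k}\bigr)\binom{n}{k-1}\frac{1}{N-X}\right)Y^k.
\]

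Finally I would apply the tower property $\mathbb{E}[H(X+1)^n] = \mathbb{E}\bigl[\mathbb{E}[H(X+1)^n\mid H(X)]\bigr]$, replacing $Y^k$ by $\mathbb{E}[H(X)^k]$ termwise, which is exactly the claimed recursion; the base case $\mathbb{E}[H(0)^n]=Y_0^n$ is immediate from $H(0)=Y_0$ being deterministic. There is no serious obstacle here — the only point requiring care is the index bookkeeping after the shift $k\mapsto k-1$, specifically checking that the sign factor becomes $1+(-1)^{n-k}$ (rather than $1-(-1)^{n-k}$) and that the otherwise-spurious $k=n+1$ term vanishes, so that the upper limit of the sum is genuinely $n$.
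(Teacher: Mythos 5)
Your proposal is correct and follows essentially the same route as the paper: condition on $H(X)=Y$, expand $(Y\pm1)^n$ binomially, reindex $k\mapsto k-1$ so that the $Y^{n+1}$ term cancels via the vanishing sign factor, and finish with the law of total expectation. The sign bookkeeping you flag ($1+(-1)^{n-k}$ after the shift, and the dropped $k=n+1$ term) checks out.
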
 
\begin{proof}
From the law of total expectation we have
\[
\mathbb{E}[H(X+1)^n] = \mathbb{E}_Y[ \mathbb{E}[H(X+1)^n | H(X)=Y]]
\]
where $\mathbb{E}_Y$ denotes the expected value with respect to the random variable $Y$. Expanding the conditional expectation on the RHS using the Markov property gives
\begin{align*}
\mathbb{E}[H(X+1)^n | H(X)=Y] & = (Y+1)^n \left(1- \frac{Y}{N-X}\right)  + (Y-1)^n  \frac{Y}{N-X} \\
& = \sum_{k=0}^n \binom{n}{k} Y^k\left(1- \frac{Y}{N-X}\right) + \sum_{k=0}^n \binom{n}{k} (-1)^{n-k}\frac{Y^{k+1}}{N-X} \\
& = \sum_{k=0}^n \binom{n}{k} Y^k - \sum_{k=0}^n \binom{n}{k} (1- (-1)^{n-k})\frac{Y^{k+1}}{N-X} \\
& = 1+ \sum_{k=1}^n \left(\binom{n}{k} -(1+ (-1)^{n-k}) \binom{n}{k-1} \frac{1}{N-X}\right) Y^k.
\end{align*}
Note that the terms depending on $Y^{n+1}$ cancel. Taking the expected value with respect to $Y$ gives the result.
\end{proof}

\begin{cor}
\begin{equation}
\begin{aligned}
 \mathbb{E}[H(X)^n] = (N-X)P_n(X,Y_0) & \text{ for } X>0 
\end{aligned}
\end{equation}
where $P_n(x,y)$ is a polynomial in two variables. 
\end{cor}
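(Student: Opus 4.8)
The plan is to convert the recursion of Proposition~\ref{prop:recMoments} into a first-order linear recursion in $X$, solve it explicitly, and keep careful track of the poles that appear. Write $g_n(X):=\mathbb{E}[H(X)^n]$ and regard $Y_0$ as a formal parameter, so that $g_n(X)\in\mathbb{Q}(N)[Y_0]$; a short induction using the recursion shows $\deg_{Y_0}g_n(X)\le n$. I would then prove, by induction on $n\ge1$ (the base case $n=1$ being the instance below with an empty sum), the sharper statement that there is a polynomial $P_n\in\mathbb{Q}(N)[x,y]$ with $\deg_x P_n\le 2n-1$ and $\deg_y P_n\le n$ such that $g_n(X)=(N-X)\,P_n(X,Y_0)$ for all integers $0\le X\le N$. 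Isolating the $k=n$ term, the recursion becomes
\[
g_n(X+1)=\frac{N-X-2n}{N-X}\,g_n(X)+b_n(X),\qquad g_n(0)=Y_0^n,
\]
with source $b_n(X)=1+\sum_{k=1}^{n-1}\bigl(\binom nk-(1+(-1)^{n-k})\binom n{k-1}\tfrac1{N-X}\bigr)g_k(X)$. By the inductive hypothesis $g_k(X)=(N-X)P_k(X,Y_0)$ for $k<n$, and because $\bigl(\binom nk-(1+(-1)^{n-k})\binom n{k-1}\tfrac1{N-X}\bigr)(N-X)$ is a polynomial of degree $\le1$ in $X$, each summand of $b_n$ is a genuine polynomial; hence $b_n\in\mathbb{Q}(N)[X,Y_0]$ with $\deg_X b_n\le 2(n-1)=2n-2$.

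Solving the recursion by iteration yields $g_n(X)=\bigl(\prod_{j=0}^{X-1}\tfrac{N-j-2n}{N-j}\bigr)Y_0^n+\sum_{i=0}^{X-1}b_n(i)\prod_{j=i+1}^{X-1}\tfrac{N-j-2n}{N-j}$. The products telescope into ratios of falling factorials, $\prod_{j=i+1}^{X-1}\tfrac{N-j-2n}{N-j}=\dfrac{(N-X)(N-X-1)\cdots(N-X-2n+1)}{(N-i-1)(N-i-2)\cdots(N-i-2n)}$ (the first product being the case $i=-1$), so after pulling out the common factor $(N-X)\prod_{l=1}^{2n-1}(N-X-l)$ the whole claim reduces to showing that
\[
\Bigl(\prod_{l=1}^{2n-1}(N-X-l)\Bigr)S_n(X),\qquad S_n(X):=\sum_{i=0}^{X-1}\frac{b_n(i)}{(N-i-1)(N-i-2)\cdots(N-i-2n)},
\]
is a polynomial in $X$ of degree $\le 2n-1$.

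The crux is the evaluation of $S_n$. Partial fractions give $\dfrac{b_n(i)}{(N-i-1)\cdots(N-i-2n)}=\sum_{m=1}^{2n}\dfrac{\alpha_m}{N-i-m}$ with $\alpha_m\in\mathbb{Q}(N)[Y_0]$, and --- this is where $\deg_i b_n\le 2n-2$, \emph{two} below the degree of the denominator, is used --- letting $i\to\infty$ forces $\sum_{m=1}^{2n}\alpha_m=0$. Using this relation together with the telescoping identity $\sum_{i=0}^{X-1}\bigl(\tfrac1{N-i-m}-\tfrac1{N-i-m-1}\bigr)=\tfrac1{N-m}-\tfrac1{N-X-m}$, the harmonic-number parts of the individual sums $\sum_{i=0}^{X-1}\tfrac1{N-i-m}$ cancel and one is left with $S_n(X)=\sum_{l=1}^{2n-1}\beta_l\bigl(\tfrac1{N-X-l}-\tfrac1{N-l}\bigr)$ for suitable $\beta_l\in\mathbb{Q}(N)[Y_0]$: a rational function of $X$ whose only poles are simple and are located exactly at $X=N-1,N-2,\dots,N-2n+1$. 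Multiplying by $\prod_{l=1}^{2n-1}(N-X-l)$ therefore clears every pole and produces a polynomial of degree $\le 2n-1$ in $X$ and $\le n$ in $Y_0$; setting $P_n(X,Y_0)$ equal to $\prod_{l=1}^{2n-1}(N-X-l)$ times the bracketed expression closes the induction. (Evaluating the solution at $X=0$ recovers $g_n(0)=Y_0^n$, so the identity also holds at $X=0$.)

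I expect this last step to be the main obstacle. A priori $S_n(X)$ is a discrete antiderivative of a rational function of $i$, hence a harmonic-number-type quantity rather than a rational function of $X$; it is only the combination of (i) carrying the degree bound $\deg_X b_n\le 2n-2$ through the induction, which forces the partial-fraction residues to sum to zero, and (ii) observing that the surviving poles are simple and sit precisely at the zeros of the prefactor $\prod_{l=1}^{2n-1}(N-X-l)$, that rescues polynomiality. Everything else --- the telescoping of the products $\prod\tfrac{N-j-2n}{N-j}$ and the bookkeeping of the degrees of $b_n$ and $P_n$ --- is routine.
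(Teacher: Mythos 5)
Your argument is correct, but it takes a genuinely different route from the paper's. The paper inducts on the \emph{length} $N$ of the walk: conditioning on the first step turns a length-$N$ walk into a length-$(N-1)$ walk started at $Y_0\pm1$, so polynomiality in $(X,Y_0)$ follows at once from the inductive hypothesis, and the factor $(N-X)$ is then extracted for free from $\mathbb{E}[H(N)^n]=0$ via the factor theorem --- about four lines in total. You instead fix $N$, induct on $n$, and solve the one-step recursion of Proposition \ref{prop:recMoments} in closed form. The crux of your approach, which you correctly identify, is that the inhomogeneous part of the solution is a priori a harmonic-number-type sum rather than a rational function of $X$; it is rescued only because the degree bound $\deg_X b_n\le 2n-2$ (two below the degree of the denominator) forces the partial-fraction residues to sum to zero, after which the telescoping leaves only simple poles sitting exactly at the zeros of the prefactor $\prod_{l=1}^{2n-1}(N-X-l)$. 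This mechanism checks out against the explicit $n=1$ case of equation (\ref{eq:smallMoments}). What your longer route buys is more information: explicit degree bounds $\deg_X P_n\le 2n-1$ and $\deg_{Y_0}P_n\le n$ (consistent with the displayed $n=1,2$ formulas) and, in principle, a closed-form construction of $P_n$; what the paper's route buys is brevity and no pole bookkeeping at all.

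One point to tidy up: rewriting $\prod_{j=i+1}^{X-1}\tfrac{N-j-2n}{N-j}$ as a ratio of products of $2n$ consecutive integers introduces denominators $N-i-m$, $1\le m\le 2n$, which vanish for the terms with $i\ge N-2n$; so for $X>N-2n$ several of your intermediate expressions are of the form $0/0$. This is harmless --- your opening move of working over $\mathbb{Q}(N)$, i.e.\ treating $N$ as an indeterminate, makes every denominator a nonzero rational function, and the resulting identity in $\mathbb{Q}(N)[X,Y_0]$ specializes to integer $N\ge 2n$ without hitting poles (the same caveat the paper itself makes about small $N$ after equation (\ref{eq:smallMoments})) --- but you should state explicitly that this is how the boundary cases $N-2n<X\le N$ are covered, since the numerical recursion for a fixed integer $N$ does not by itself justify those cancellations.
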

\begin{proof}
We'll prove this with induction on $n$ and the length of the walk $N$. Suppose it holds for walks of length $N-1$. We have that
\[
\begin{aligned}
\mathbb{E}[H(X)^n] & = \mathbb{E}_{Y_1}[ \mathbb{E}[H(X)^n|H(1)=Y_1]]\\
& =\mathbb{E}[(N-X)P_n(X-1,H(1))] \\
& = \mathbb{E}[(1-\frac{Y_0}{N})P_n(X-1,Y_0+1)+\frac{Y_0}{N}P_n(X-1,Y_0-1)].
\end{aligned}
\]
The second equality follows since conditioning on $H(X_1)$ results in a new walk with $N\to N-1$, $X\to X-1$, and initial height $Y_1$.  Since the last line is a polynomial in $X$ and $Y_0$, so is $\mathbb{E}[H(X)^n]$. Since $\mathbb{E}[H(N)^n]=0$, the polynomial must have a factor of $(N-X)$ which finishes the proof. 
\end{proof}

In particular, we can solve this recurrence for $n=1,2$ (and $N\ge2n$) to get the rather messy formulas
{\tiny
\begin{equation}\label{eq:smallMoments}
\begin{aligned}
\mathbb{E}[H(X)] = & \frac{X(N-X)}{N-1} + Y_0 \frac{(N-X)(N-X-1)}{N(N-1)} \\
\mathbb{E}[H(X)^2] = & \frac{X(N-X)(N X - X^2 -2)}{(N-1)(N-3)} + Y_0 \frac{2X(N-X)(N-X-1)((N-1)^2-(N-1)X)}{N(N-1)(N-2)(N-3)}\\
& + Y_0^2 \frac{(N-X)(N-X-1)(N-X-2)(N-X-3)}{N(N-1)(N-2)(N-3)} \\
Var[H(X)] = & \mathbb{E}[H(X)^2]-\mathbb{E}[H(X)]^2 \\
= & \frac{2X(X-1)(N-X)(N-X-1)}{(N-1)^2(N-3)} + Y_0 \frac{2X(2N^2-5N+1-(3N-5)X)(N-X)(N-X-1)}{N(N-1)^2(N-2)(N-3)} \\
& - Y_0^2 \frac{2X(2N ^2-6N+3-(2N-3)X)(N-X)(N-X-1)}{N^2(N-1)^2(N-2)^2(N-3)}.
\end{aligned}
\end{equation}
}
\begin{figure} [!htb]
\begin{minipage}[c]{0.45\linewidth}
\includegraphics[width=\linewidth]{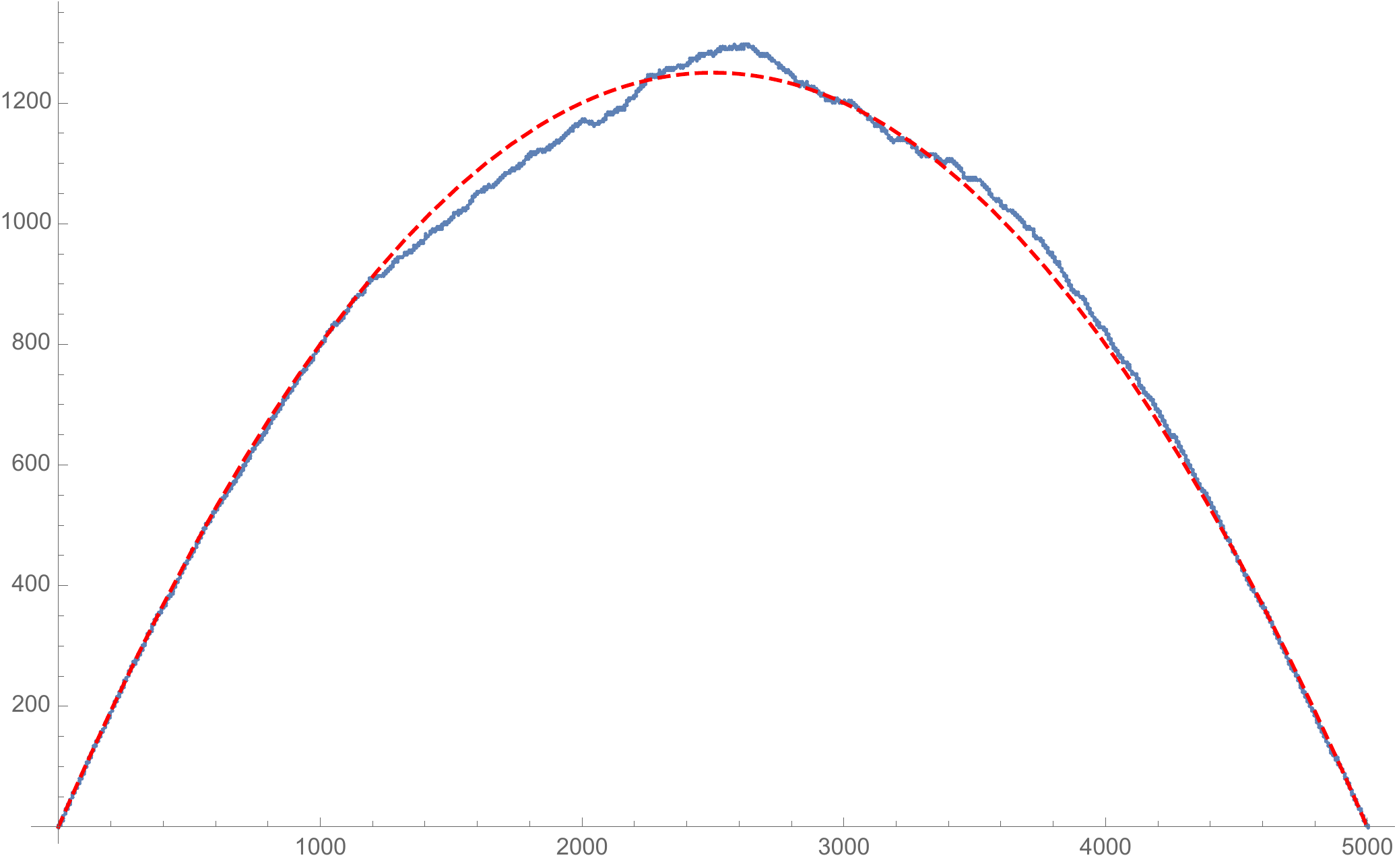}

\end{minipage}
\hfill
\begin{minipage}[c]{0.45\linewidth}
\includegraphics[width=\linewidth]{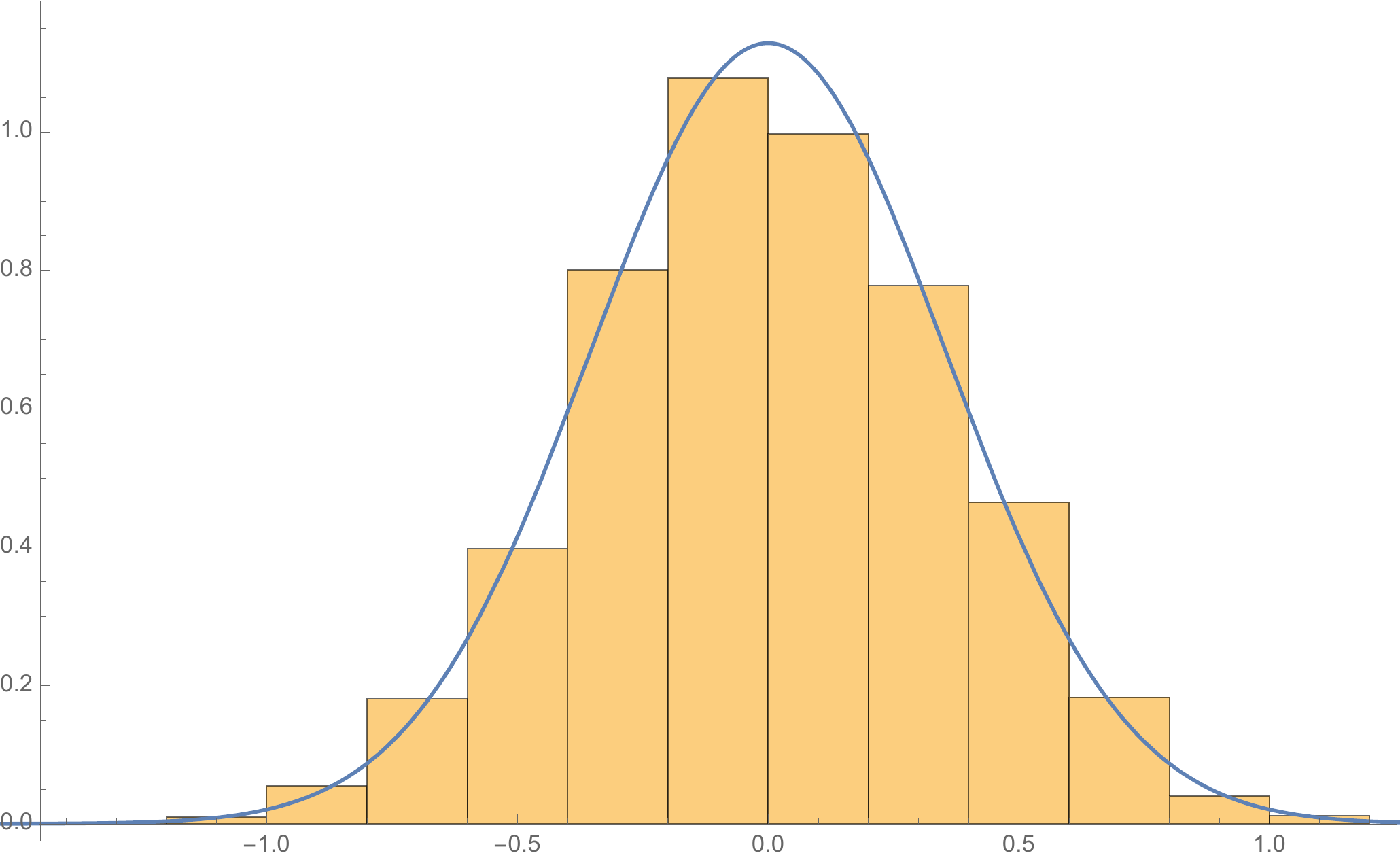}

\end{minipage}
\caption{The figure on the right shows one instance of a random walk of length $N=5000$ and $Y_0=0$ (solid line), along with the expected path from equation (\ref{eq:smallMoments}) (dashed line). The figure on the left shows a histogram of fluctuations away from the mean at $X=2500$ (for walk of length $N=5000$ and $Y_0=0$) scaled by $\frac{1}{\sqrt{N}}$, sampled from 5000 instances of the random walk. The overlaid curve is a normal distribution with mean 0 and variance $2x^2(1-x)^2$ with $x=\frac{1}{2}$. }
\label{fig:smallMoments}
\end{figure}
Figure \ref{fig:smallMoments} show a comparison of the derived formula with numeric simulations of the random walk.

\begin{remark}
It appears that in eqn. (\ref{eq:smallMoments}) for a walk of length, say, $N=3$ that the second moment should go to infinity due to the factor of $N-3$ in the denominator. However, one can check that for any $X\in\{0,1,2,3\}$ there is a corresponding term in the numerator that also goes to zero. Cancelling these gives the correct finite value for the moment. As we will only consider walks where $N$ is very large, we will ignore this subtlety.
\end{remark}

It is also possible to calculate the mixed moments recursively. 
\begin{prop}
Fix nonnegative integers $a_1,a_2,\ldots,a_n$, and consider points $X_1<X_2<\ldots<X_n$. Define $F_{\vec{a}}(X_1,...,X_n)=H(X_1)^{a_1}  \ldots H(X_n)^{a_n}$. Then this mixed moments can be recursively calculated by

{\small
\begin{align*}
\mathbb{E}[F_{\vec{a}}(X_1,...,X_n)] & =  \mathbb{E}[F_{\vec{a}}(X_1,...,X_{n-1})] \\
& + \sum_{k=1}^{a_n} \left(\binom{a_n}{k} -(1+ (-1)^{a_n-k}) \binom{a_n}{k-1} \frac{1}{N-X_n}\right) \mathbb{E}[F_{\vec{a}}(X_1,...,X_{n-1})H(X_n-1)^k]. 
\end{align*}
}

\end{prop}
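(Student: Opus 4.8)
The plan is to reduce the mixed moment at the last time point $X_n$ to mixed moments evaluated at $X_n-1$, by reusing almost verbatim the single-step binomial expansion from the proof of Proposition~\ref{prop:recMoments} while treating the earlier factors $H(X_1)^{a_1}\cdots H(X_{n-1})^{a_{n-1}}$ as inert weights. First I would condition on the whole trajectory up to and including time $X_n-1$. Since $X_1<\cdots<X_{n-1}<X_n$ are integers, each $X_j$ with $j\le n-1$ satisfies $X_j\le X_n-1$, so this configuration already determines $H(X_1),\dots,H(X_{n-1})$. Writing $\mathcal{F}$ for the $\sigma$-algebra generated by $H(0),\dots,H(X_n-1)$, the law of total expectation gives
\[
\mathbb{E}[F_{\vec a}(X_1,\dots,X_n)] = \mathbb{E}\!\left[H(X_1)^{a_1}\cdots H(X_{n-1})^{a_{n-1}}\,\mathbb{E}\bigl[H(X_n)^{a_n}\mid\mathcal{F}\bigr]\right].
\]

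Next I would invoke the Markov property of the walk: the step from time $X_n-1$ to time $X_n$ is independent of the past given $H(X_n-1)$, so $\mathbb{E}[H(X_n)^{a_n}\mid\mathcal{F}] = \mathbb{E}[H(X_n)^{a_n}\mid H(X_n-1)]$. This is exactly the conditional expectation computed in the proof of Proposition~\ref{prop:recMoments}, now with the role of $X$ there played by $X_n-1$ and with $Y=H(X_n-1)$. Running the same expansion of $(Y+1)^{a_n}$ and $(Y-1)^{a_n}$, and using the same cancellation of the top-degree $H(X_n-1)^{a_n+1}$ term, yields
\[
\mathbb{E}\bigl[H(X_n)^{a_n}\mid H(X_n-1)\bigr] = 1 + \sum_{k=1}^{a_n}\left(\binom{a_n}{k}-(1+(-1)^{a_n-k})\binom{a_n}{k-1}\frac{1}{N-X_n}\right)H(X_n-1)^k,
\]
with the bracketed coefficients precisely those in the statement.

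Finally I would substitute this back into the previous display, distribute the weight $H(X_1)^{a_1}\cdots H(X_{n-1})^{a_{n-1}}$ over the constant term and over the sum, and take the outer expectation. The constant term contributes $\mathbb{E}[H(X_1)^{a_1}\cdots H(X_{n-1})^{a_{n-1}}] = \mathbb{E}[F_{\vec a}(X_1,\dots,X_{n-1})]$, while the $k$-th summand contributes $\mathbb{E}[H(X_1)^{a_1}\cdots H(X_{n-1})^{a_{n-1}}H(X_n-1)^k] = \mathbb{E}[F_{\vec a}(X_1,\dots,X_{n-1})H(X_n-1)^k]$, which is exactly the claimed identity. If $X_{n-1}=X_n-1$ one simply reads $F_{\vec a}(X_1,\dots,X_{n-1})H(X_n-1)^k$ as $H(X_1)^{a_1}\cdots H(X_{n-1})^{a_{n-1}+k}$, still a mixed moment of the same form, so the recursion remains well posed.

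There is no substantive obstacle here; the only point requiring care is the Markov reduction in the second step, namely that conditioning on the entire history $H(0),\dots,H(X_n-1)$, which in particular fixes all of the earlier evaluation points $X_1,\dots,X_{n-1}$, collapses to conditioning on $H(X_n-1)$ alone because the last increment depends only on $H(X_n-1)$. Everything else is the binomial expansion already carried out in Proposition~\ref{prop:recMoments}. Iterating the identity (lowering $X_n$ to $X_n-1$, thinning $a_n$, and eventually absorbing $X_n$ into $X_{n-1}$) expresses any mixed moment in terms of the one-point moments $\mathbb{E}[H(X)^k]$ of Proposition~\ref{prop:recMoments}.
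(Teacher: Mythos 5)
Your proof follows essentially the same route as the paper's: condition on the history, invoke the Markov property to reduce to conditioning on $H(X_n-1)$ alone, apply the one-step binomial expansion from Proposition \ref{prop:recMoments}, and take the outer expectation with the earlier factors treated as inert weights. One point to fix: the transition from $X_n-1$ to $X_n$ corresponds to setting $X=X_n-1$ in Proposition \ref{prop:recMoments}, so the expansion produces the denominator $N-(X_n-1)=N-X_n+1$, not the $N-X_n$ you wrote; as written, your claim that the substitution $X=X_n-1$ yields $\frac{1}{N-X_n}$ is internally inconsistent. This off-by-one is inherited from the statement of the proposition itself (the paper's own proof displays $\frac{1}{N-X_n+1}$ before asserting the result), so it is almost certainly a typo in the statement rather than a defect in your argument, but your write-up should either correct the denominator or note the discrepancy.
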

\begin{proof}
Fix $X_1<X_2<\ldots<X_n$ and $a_1,\ldots,a_{n} \in \mathbb{N}$.  From the law of total expectation we have
\begin{align*}
\mathbb{E}[F_{\vec{a}}(X_1,...,X_n)] & = \mathbb{E}_Y[\mathbb{E}[F_{\vec{a}}(X_1,...,X_n) | H(X_1)=Y_1, \ldots, H(X_{n-1})=Y_{n-1}]] \\
& =  \mathbb{E}_Y[Y_1^{a_1}\ldots Y_{n-1}^{a_{n-1}}\mathbb{E}[H(X_n)^{a_n}|H(X_{n-1})=Y_{n-1}]]
\end{align*}
where we use the Markov property to simplify the conditional expectation. Note that after conditioning $H(X_{n-1})=Y_{n-1}$, the rest of the walk behaves as the original walk with shifted coordinate $X\to X-X_{n-1}$ and $N\to N-X_{n-1}$. This implies that
{\tiny
\[
\mathbb{E}[H(X_n)^{a_n}|H(X_{n-1})=Y_{n-1}] =  1+ \sum_{k=1}^{a_n} \left(\binom{a_n}{k} -(1+ (-1)^{a_n-k}) \binom{a_n}{k-1} \frac{1}{N-X_n+1}\right) \mathbb{E}[H(X_n-1)^k|H(X_{n-1})=Y_{n-1}]
\]
}
using Proposition (\ref{prop:recMoments}). Plugging this into the above gives the desired result.
\end{proof}

A similar calculation to the corollary of Proposition (\ref{prop:recMoments}) proves the following.
\begin{cor}
Fix nonnegative integers $a_1,\ldots, a_n$ and points $X_1<\ldots<X_n$. Then
\[
F_{\vec{a}}(X_1,\ldots,X_n)  = (N-X_n)P_{\vec{a}}(X_1,\ldots,X_n,Y_0)
\]
where $P_{\vec{a}}(x_1,\ldots,x_n,y)$ is a polynomial in $1+\sum_{i=1}^n a_i$ variables.
\end{cor}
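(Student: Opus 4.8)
The plan is to imitate the proof of the corollary of Proposition~\ref{prop:recMoments} almost verbatim: induct on the length $N$ of the walk, conditioning on the first step $H(1)$ and exploiting the self-similarity of the area walk. Since $N$ is a fixed constant, expressions such as $\tfrac{Y_0}{N}$ count as polynomials in $Y_0$, and the only variables in play are $X_1,\dots,X_n$ and $Y_0$. It is worth saying at the outset why one conditions on $H(1)$ rather than iterating the recursion of the preceding proposition directly: that recursion drags the last time point from $X_n$ down toward $X_{n-1}$ in a number of steps depending on $X_n$, which obscures polynomiality in $X_n$; conditioning on $H(1)$, by contrast, shifts every $X_i$ by exactly one and lowers $N$ by exactly one, so polynomiality is preserved transparently at each step. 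Before starting I would assume all $a_i\ge 1$ (a coordinate with $a_i=0$ may be deleted) and dispose of the case $X_1=0$: there $H(0)=Y_0$ is deterministic, so $\mathbb{E}[F_{\vec a}(0,X_2,\dots,X_n)]=Y_0^{a_1}\,\mathbb{E}[F_{(a_2,\dots,a_n)}(X_2,\dots,X_n)]$, and the statement for $n-1$ points, together with the fact that multiplication by $Y_0^{a_1}$ preserves polynomiality, settles this case (for $n=1$ it reads $Y_0^{a_1}=(N-0)\cdot\tfrac1N\,Y_0^{a_1}$).

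So assume $1\le X_1<\dots<X_n$. By the law of total expectation,
\[
\mathbb{E}[F_{\vec a}(X_1,\dots,X_n)]=\mathbb{E}_{Y_1}\big[\,\mathbb{E}[H(X_1)^{a_1}\cdots H(X_n)^{a_n}\mid H(1)=Y_1]\,\big].
\]
The structural input, the same one already used in Proposition~\ref{prop:recMoments} and its corollary, is that conditioned on $H(1)=Y_1$ the process $X\mapsto H(X+1)$ is itself an area walk, now of length $N-1$ and initial height $Y_1$; this follows at once from the transition weights, since in shifted coordinates $(X',N')=(X-1,N-1)$ one has $\tfrac{N'-X'-Y}{N'-X'}=\tfrac{N-X-Y}{N-X}$, which is the original weight at position $X$. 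Since the inductive hypothesis is a polynomial identity in the initial height, we may apply it to this length-$(N-1)$ walk with $Y_1$ in place of $Y_0$, obtaining for a fixed polynomial $Q_{\vec a}$
\[
\mathbb{E}[H(X_1)^{a_1}\cdots H(X_n)^{a_n}\mid H(1)=Y_1]=\big((N-1)-(X_n-1)\big)\,Q_{\vec a}(X_1-1,\dots,X_n-1,Y_1)=(N-X_n)\,Q_{\vec a}(X_1-1,\dots,X_n-1,Y_1).
\]

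Finally I would average over $Y_1$, which equals $Y_0+1$ with probability $\tfrac{N-Y_0}{N}$ and $Y_0-1$ with probability $\tfrac{Y_0}{N}$, to get
\[
\mathbb{E}[F_{\vec a}(X_1,\dots,X_n)]=(N-X_n)\!\left[\tfrac{N-Y_0}{N}\,Q_{\vec a}(X_1-1,\dots,X_n-1,Y_0+1)+\tfrac{Y_0}{N}\,Q_{\vec a}(X_1-1,\dots,X_n-1,Y_0-1)\right].
\]
The bracketed quantity is $Q_{\vec a}$ precomposed with affine substitutions and scaled by coefficients affine in $Y_0$, hence itself a polynomial $P_{\vec a}(X_1,\dots,X_n,Y_0)$, which is exactly the asserted form. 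The base cases are immediate: $n=1$ is the corollary of Proposition~\ref{prop:recMoments}, while whenever $X_n=N$ both sides vanish because $H(N)=0$ almost surely (here using $a_n\ge 1$). To make the whole argument a single induction, the clean choice is to order the pair $(N,n)$ lexicographically, so that the $X_1=0$ reduction (which lowers $n$ at fixed $N$) and the conditioning step (which lowers $N$) are both honest appeals to the inductive hypothesis. I do not expect a genuine obstacle here --- the mathematical content already lives inside the $n=1$ corollary --- so the real work is just this bookkeeping, plus the one-line self-similarity check above.
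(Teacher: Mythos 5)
Your argument is correct and is exactly the ``similar calculation'' the paper invokes: it is the proof of the corollary to Proposition~\ref{prop:recMoments} (condition on $H(1)$, use the self-similarity $N\to N-1$, $X\to X-1$, average over $Y_1$) carried over verbatim to the mixed moments, with the factor $(N-X_n)$ emerging from the inductive hypothesis applied to the shifted walk. The extra bookkeeping you supply --- the $X_1=0$ reduction, the lexicographic induction on $(N,n)$, and the explicit self-similarity check of the transition weights --- only makes explicit what the paper leaves implicit.
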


As a useful example, we can write down a formula for the covariance. Fix $X_1<X_2$. We have
{\tiny
\begin{equation} \label{eq:covariance}
\begin{aligned}
Cov[H(X_1),H(X_2)] = & \mathbb{E}[H(X_1)H(X_2)] - \mathbb{E}[H(X_1)] \mathbb{E}[H(X_2)] \\
= &  \frac{2X_1(X_1-1)(N-X_2)(N-X_2-1)}{(N-1)^2(N-3)} \\
& + Y_0 \frac{2X_1(2N^2-5N+1-(3N-5)X_1)(N-X_2)(N-X_2-1)}{N(N-1)^2(N-2)(N-3)} \\
& - Y_0^2 \frac{2X_1(2N ^2-6N+3-(2N-3)X_1)(N-X_2)(N-X_2-1)}{N^2(N-1)^2(N-2)^2(N-3)}.
\end{aligned}
\end{equation}
}
Figure \ref{fig:Covariance} shows this covariance in the case $Y_0=0$.
\begin{figure}
\includegraphics[width=0.5\linewidth]{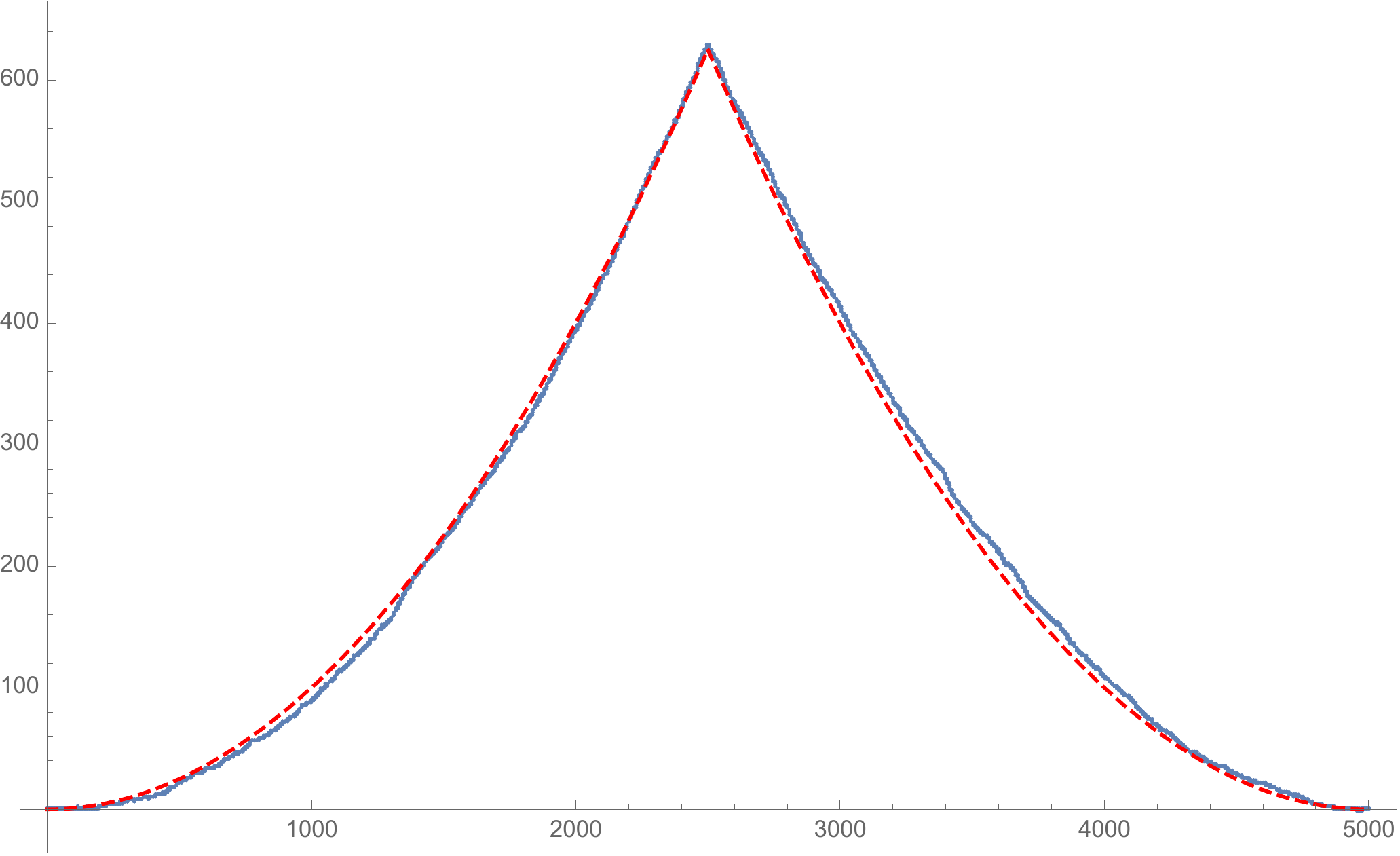}
\caption{The solid curve sample covariance of a walk of length $N=5000$ and $Y_0=0$ with one point fixed at $X=2500$, averaged over 5000 walks. The dashed curve is the curve calculated in equation (\ref{eq:covariance}).}
\label{fig:Covariance}
\end{figure}

Using equations (\ref{eq:smallMoments}) and (\ref{eq:covariance}), its easy to prove the following result from \cite{HZ}.
\begin{cor}
The volume $V$ of an oscillating tableaux of length $N$ and shape $|\lambda|=Y_0$, has the properties
\begin{align*}
& \mathbb{E}[V] = \frac{N(N+1)}{6} + Y_0 \frac{N+1}{3} \\
& Var[V] =  \frac{(N+1)N(N-2)}{45} + Y_0 \frac{(N+1)(3N+2)}{45} - Y_0^2\frac{4(N+1)}{45}
\end{align*}
\end{cor}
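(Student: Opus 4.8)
The plan is to realise the volume as the sum of the heights of the area walk and then apply linearity of expectation together with bilinearity of covariance. Writing $V=\sum_{X=0}^{N}H(X)$ — the area of $\lambda^{(i)}$ is exactly $H(N-i)$, and the deterministic endpoints $H(0)=Y_0$, $H(N)=0$ simply contribute the terms $Y_0$ and $0$ — we get $\mathbb{E}[V]=\sum_{X=0}^{N}\mathbb{E}[H(X)]$. Substituting the first-moment formula from (\ref{eq:smallMoments}) and evaluating the two elementary power sums $\sum_{X=0}^{N}X(N-X)=\tfrac{1}{6}N(N+1)(N-1)$ and $\sum_{X=0}^{N}(N-X)(N-X-1)=\tfrac{1}{3}N(N+1)(N-1)$ makes the denominators $N-1$ and $N(N-1)$ cancel and yields $\mathbb{E}[V]=\tfrac{N(N+1)}{6}+Y_0\tfrac{N+1}{3}$.

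For the variance I would use $Var[V]=\sum_{X_1=0}^{N}\sum_{X_2=0}^{N}Cov[H(X_1),H(X_2)]$, reading $Cov[H(X),H(X)]=Var[H(X)]$ and using symmetry in $X_1,X_2$. The key observation that keeps the bookkeeping manageable is that the expression (\ref{eq:covariance}), although derived for $X_1<X_2$, specialises on the diagonal $X_1=X_2=X$ to exactly the $Var[H(X)]$ formula of (\ref{eq:smallMoments}); hence it suffices to compute the symmetric double sum of (\ref{eq:covariance}) with argument ordering $(\min,\max)$, i.e. $\sum_{X}Var[H(X)]+2\sum_{X_1<X_2}Cov[H(X_1),H(X_2)]$. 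Each of the three terms of (\ref{eq:covariance}) factors as a polynomial in $X_1$ times a polynomial in $N-X_2$, so I would first perform the inner sum over $X_2$ from $X_1$ to $N$ to get a polynomial in $X_1$, and then the outer sum over $X_1$ from $0$ to $N$; every step is a finite power-sum evaluation. Collecting the three contributions and simplifying should give $Var[V]=\tfrac{(N+1)N(N-2)}{45}+Y_0\tfrac{(N+1)(3N+2)}{45}-Y_0^2\tfrac{4(N+1)}{45}$.

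The only real difficulty is computational: the $Y_0$ and $Y_0^2$ terms of (\ref{eq:covariance}) carry the awkward linear-in-$X_1$ numerators $2N^2-5N+1-(3N-5)X_1$ and $2N^2-6N+3-(2N-3)X_1$, so the nested sums must be tracked carefully, and one must check that the denominators $(N-1)^2(N-3)$, $N(N-1)^2(N-2)(N-3)$ and $N^2(N-1)^2(N-2)^2(N-3)$ are entirely cancelled by the numerators produced by the summation — as they must be, since $V$ is a bounded integer-valued random variable for every valid $N$. As a sanity check one can verify the $Y_0=0$ specialisation directly from the $Y_0$-free line of (\ref{eq:covariance}), and note that the leading behaviour $Var[V]\sim\tfrac{1}{45}N^{3}$ is consistent with the Gaussian limit treated in the following sections. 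An alternative that sidesteps (\ref{eq:covariance}) altogether is to set up a two-step recursion in $N$ and $Y_0$ for $\mathbb{E}[V]$ and $\mathbb{E}[V^2]$ by conditioning on $H(1)$, in the spirit of Proposition \ref{prop:recMoments}, but the summation route is the most direct given the formulas already in hand.
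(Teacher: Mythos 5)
Your proposal is correct and follows essentially the same route as the paper: write $V=\sum_{X=0}^N H(X)$, sum the first-moment formula from (\ref{eq:smallMoments}) for $\mathbb{E}[V]$, and compute $Var[V]=\sum_{X_1,X_2}Cov[H(X_1),H(X_2)]$ using (\ref{eq:covariance}). The paper leaves the double summation as an unstated computation, exactly the step you describe carrying out explicitly, so there is no substantive difference in approach.
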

\begin{proof}
Let $Y_i = H(X_i)$. Write the volume as $V = \sum_{i=0}^N Y_i$. Then we have
\begin{align*}
\mathbb{E}[V] & =  \sum_{i=0}^N \mathbb{E}[Y_i] \\
& =  \sum_{i=0}^N \left( \frac{i(N-i)}{N-1} + Y_0 \frac{(N-i)(N-i-1)}{N(N-1)} \right) \\
& =  \frac{N(N+1)}{6} + Y_0 \frac{N+1}{3}
\end{align*}
as in \cite{HZ}. Further, we can calculate the variance
\begin{align*}
Var[V] & = Var\left[\sum_{i=0}^N Y_i\right] \\
& = \sum_{i,j=0}^N Cov[Y_i,Y_j] \\
& = \frac{(N+1)N(N-2)}{45} + Y_0 \frac{(N+1)(3N+2)}{45} - Y_0^2\frac{4(N+1)}{45}
\end{align*}
as desired.
\end{proof}
Figure \ref{fig:Volume} compare the above equations to the numerically sampled volume when $N$ is large.
\begin{figure}
\includegraphics[width=0.5\linewidth]{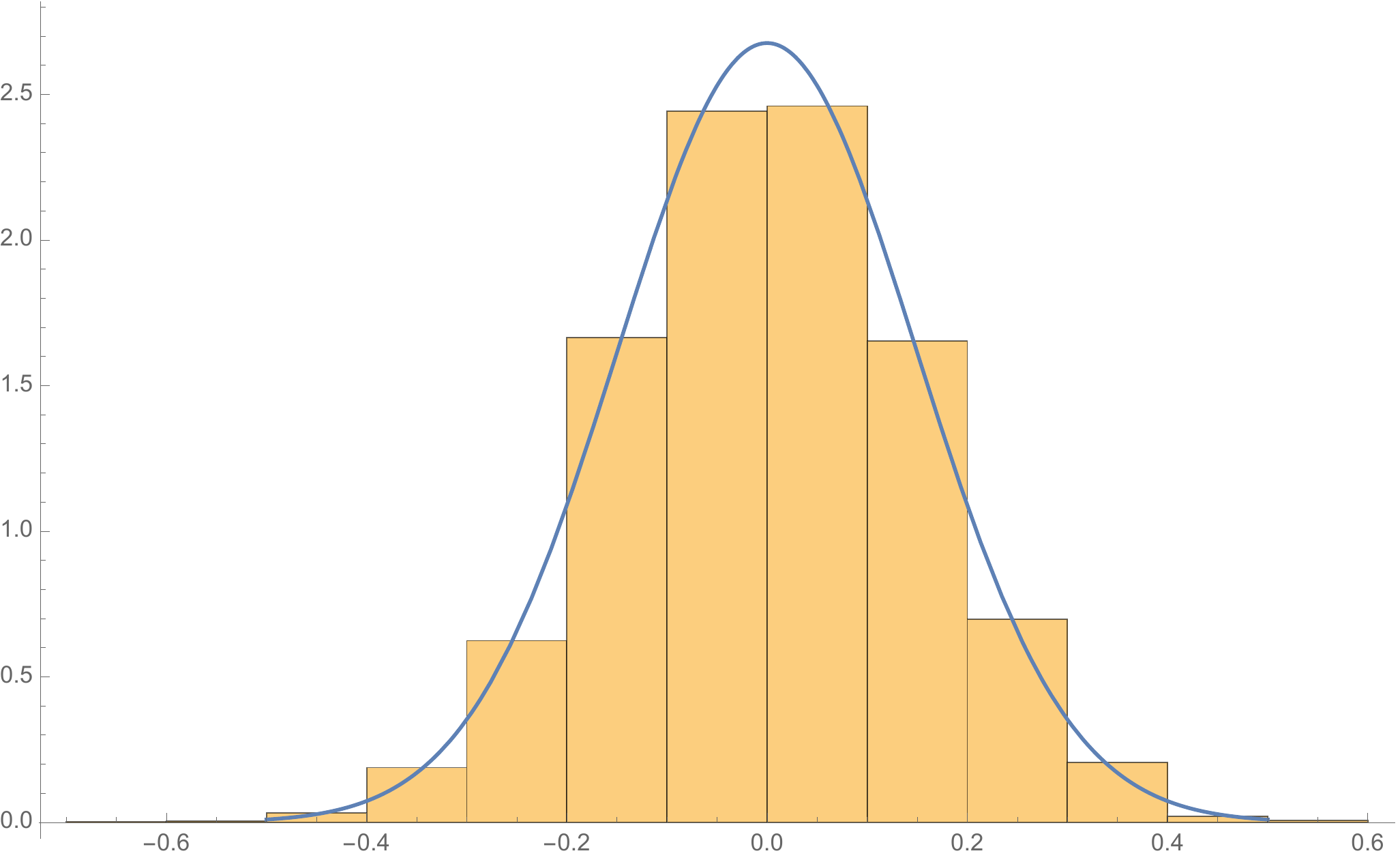}
\caption{A histogram of 5000 samples of the quantity $\frac{V-N^2/6}{\sqrt{N^3}}$, where $V$ is the volume of of an oscillating tableaux of length $N=5000$ from the empty partition to the empty partition. The overlaid curve is a normal distribution with mean 0 and variance $\frac{1}{45}$.}
\label{fig:Volume}
\end{figure}

Note that in \cite{HX} (Theorem 1.3), the authors extended the above by showing the polynomality of certain weighted averages of oscillating tableaux. It is easy to rephrase there results in this setting.
\begin{thm}[From \cite{HX}]
Let $P(x,y)$ be a polynomial of two variables. Then there exists a polynomial $Q(x,y)$ with the same degree and constant term as $P(x,y)$ such that
\begin{equation}
\mathbb{E}\left[\sum_{i=0}^N P(i,Y_i) \right] = (N+1) Q(N,Y_0)
\end{equation}
where $Y_i=H(i)$.
\end{thm}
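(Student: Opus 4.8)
The quickest route is to observe that this statement \emph{is} \cite[Theorem 1.3]{HX} read through the dictionary of Remark \ref{rmk:AreaRW}. Indeed, the area sequence of a uniformly random oscillating tableau of shape $\lambda$ and length $N$ depends only on the sizes $|\lambda^{(0)}|,\dots,|\lambda^{(N)}|$, so, reindexing $j=N-i$,
\[
\mathbb{E}\Big[\sum_{i=0}^N P(i,Y_i)\Big]=\frac{1}{\#\mathcal{OT}(\lambda,N)}\sum_{T\in\mathcal{OT}(\lambda,N)}\ \sum_{j=0}^N P\big(N-j,\,|\lambda^{(j)}(T)|\big),\qquad |\lambda|=Y_0,
\]
and the right-hand side is precisely the sort of weighted average over oscillating tableaux whose polynomiality in $N$ and $|\lambda|$, after dividing by $N+1$ and with matching degree and constant term, is the content of \cite{HX}. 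Since replacing $P(x,y)$ by $P(N-x,y)$ changes neither the degree nor the hypotheses, the plan here is simply to make this reduction precise and quote that result.

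I would also want a self-contained proof using only the machinery above. By linearity it suffices to take $P(x,y)=x^ay^b$; set $\Phi(N,Y_0):=\mathbb{E}[\sum_{i=0}^N i^a Y_i^b]$. I would do the case $Y_0=0$ first. Under the classical bijection between oscillating tableaux of empty shape and perfect matchings, the area walk started at $0$ is the arc-height process $h$ of a uniformly random perfect matching of $\{1,\dots,N\}$ ($N$ even), so that $Y_i=h(i)$. Passing through factorial moments, the $r$-th factorial moment of $h(i)$ is the ordered count $(i)_r(N-i)_r$ of $r$ disjoint pairs straddling $i$ times the probability $(N-2r-1)!!/(N-1)!!$ that $r$ prescribed disjoint pairs all occur, whence
\[
\mathbb{E}[h(i)^b]=\sum_{r} S(b,r)\,(i)_r(N-i)_r\,\frac{(N-2r-1)!!}{(N-1)!!},
\]
where $S(b,r)$ is the number of partitions of a $b$-set into $r$ blocks, $(x)_r=x(x-1)\cdots(x-r+1)$, and $(N-2r-1)!!/(N-1)!!=1/\big((N-1)(N-3)\cdots(N-2r+1)\big)$. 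Summing over $i$, and expanding $i^a(i)_r$ in falling factorials before applying the Vandermonde-type identity $\sum_{i=0}^N (i)_u(N-i)_r=u!\,r!\binom{N+1}{u+r+1}$, shows that each inner sum $\Sigma_{a,r}(N):=\sum_{i=0}^N i^a(i)_r(N-i)_r$ is a polynomial in $N$ divisible by $N+1$ (because $\binom{N+1}{m}$ is, for $m\ge1$). Moreover $\Sigma_{a,r}(N)$ vanishes at $N=1,3,\dots,2r-1$ — for such $N$ no index $i$ makes $(i)_r$ and $(N-i)_r$ simultaneously nonzero — so $\Sigma_{a,r}(N)$ is also divisible by $(N-1)(N-3)\cdots(N-2r+1)$; this factor is coprime to $N+1$ and cancels the double-factorial denominator exactly, giving $\Phi(N,0)=(N+1)\cdot(\text{polynomial in }N)$.

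For general $Y_0=k$ the same scheme applies with partial matchings: the area walk started at $k$ is the arc-height process of a uniform partial matching of $\{1,\dots,N\}$ with $k$ unmatched points (equivalently, of a uniform perfect matching of $\{1,\dots,N+k\}$ conditioned so that $N+1,\dots,N+k$ are all right endpoints). The factorial-moment computation now also records how many of the $r$ straddling arcs end at a special point, which brings in extra falling-factorial and double-factorial factors in $k$ and an average over the hypergeometrically distributed number of special points lying in a prefix; but the summation over $i$ is still resolved by the Vandermonde identity, and the same ``empty range'' vanishing shows that $\Phi(N,Y_0)$ equals $N+1$ times a rational function of $N$ which, after these explicit cancellations, is a polynomial. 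Combined with the fact that $\Phi(N,Y_0)$ is visibly a polynomial of degree $\le b$ in $Y_0$ for fixed $N$, this yields $\Phi(N,Y_0)=(N+1)Q(N,Y_0)$ with $Q$ a genuine polynomial. Finally $Q(0,0)=\Phi(0,0)=0^a0^b$ matches the constant term of $x^ay^b$, and tracking the dominant contribution (the $r=b$ term, with summand $\sim i^a\big(i(N-i)/N\big)^b$) gives $\Phi\sim N^{a+b+1}$ and hence $\deg Q=a+b$; a short triangularity argument shows these top-degree parts do not cancel under linear combinations, so the general $P$ follows by linearity.

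The crux — and the step I expect to be the main obstacle in the self-contained route — is the cancellation of the denominators in $N$: the $(N-1)(N-3)$ already visible in (\ref{eq:smallMoments}), and the remark following it, show that these apparent poles are spurious, but seeing this requires either the matching model (where the Vandermonde identity together with the vanishing of $\Sigma_{a,r}$ at $N=1,3,\dots,2r-1$ makes it transparent) or, failing that, clearing denominators in the recursion of Proposition \ref{prop:recMoments} and verifying the requisite divisibility of the numerator of $\Phi$ by hand — which is more computation than insight, and is essentially the labor that \cite{HX} has already carried out on the tableaux side.
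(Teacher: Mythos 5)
The paper offers no proof of this statement at all: it is stated with the attribution ``From \cite{HX}'' and the only content is the translation, via Remark \ref{rmk:AreaRW}, of Theorem 1.3 of \cite{HX} into the language of the walk $H$. Your first paragraph is therefore exactly the paper's route, and it is the correct way to discharge the statement as the paper intends it. Your self-contained argument is a genuine addition, and the $Y_0=0$ part of it is complete and correct: identifying the walk with the arc-height process of a uniform perfect matching, computing $\mathbb{E}[(h(i))_r]=(i)_r(N-i)_r\,(N-2r-1)!!/(N-1)!!$, summing over $i$ by the Vandermonde identity to extract the factor $N+1$, and killing the double-factorial denominator by observing that $\Sigma_{a,r}$ vanishes at $N=1,3,\dots,2r-1$ is a clean and transparent way to see the cancellation that the recursion of Proposition \ref{prop:recMoments} obscures (I checked it reproduces the first two moments in (\ref{eq:smallMoments})). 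Two caveats on the parts you only sketch. First, the general-$Y_0$ case via partial matchings is plausible but not carried out; the conditioning on which straddling arcs terminate at unmatched points is exactly where the extra bookkeeping lives, and you have not exhibited the cancellation there. Second, your degree argument (``these top-degree parts do not cancel under linear combinations'') is too quick: they \emph{can} cancel in the $N$-variable alone --- for $P=x-3y$ one gets $\mathbb{E}[\sum_i(i-3Y_i)]=-(N+1)Y_0$, so $Q(N,0)\equiv 0$ --- and the claim $\deg Q=\deg P$ must be recovered from the joint dependence on $(N,Y_0)$, not from the $Y_0=0$ specialization. Neither caveat affects the validity of your primary (citation) route, which is what the paper does.
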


\begin{remark}
The fact that we can recursively calculate the moments as in Proposition (\ref{prop:recMoments}), in spite of the position-dependent weights, is somewhat special. For example, the weights $p(x,y) = 1- \left(\frac{Y}{N-X}\right)^k$, $q(x,y) = \left(\frac{Y}{N-X}\right)^k$, $k>1$, still share the property that the $\mathbb{P}[H(N)=0]=1$. But the analogue of Proposition (\ref{prop:recMoments}) for the first moment becomes
\[
\mathbb{E}[H(X+1)]=1+\mathbb{E}[H(X)] -\frac{2\mathbb{E}[H(X)^k]}{(N-X)^k}
\]
which already depends on higher moments of the walk.
\end{remark}

\section{Master Equation}  \label{sec:3}

Define $p(X,Y)$ by $p(X,Y) = \mathbb{P}[H(X)=Y]$. From the Markov property of the random walk, we know that this satisfies the relation
{\small
\begin{equation}\label{eq:masterDiscrete}
\begin{aligned}
p(X+1,Y)  = & \mathbb{P}[H(X+1)=Y | H(X) = Y-1] p(X,Y-1) \\
& \hspace{1.5cm} +  \mathbb{P}[H(X+1)=Y | H(X) = Y+1] p(X,Y+1) \\
 = & \left( 1 - \frac{Y-1}{N-X} \right) p(X,Y-1) + \frac{Y+1}{N-X} p(X,Y+1)
\end{aligned}
\end{equation}
}
with initial condition $p(0,Y_0)=1$. This is the Master equation for the distribution.

We are interested in solutions to eqn. (\ref{eq:masterDiscrete}) in the limit $X=Nx$, $Y=Ny$, $Y_0=Ny_0$, and $N\to \infty$ (the Fokker-Planck equation for the distribution). Note in this limit $0<x<1$ and $0<y_0<1$. We'll write continuum trajectory of the walk as $h(x)=y$ where $h(x) = \frac{1}{N}H(Nx)$ with $N\to\infty$. We abuse notation and let $p(x,y)$ be the continuum distribution as well. 

Expanding in powers of $\frac{1}{N}$, we obtain
{\tiny
\begin{align*}
p(x,y) + \frac{1}{N} \partial_x p(x,y) + \frac{1}{2N^2} \partial_x^2 p(x,y) = & \left(1 - \frac{y}{1-x} + \frac{1}{N}\frac{1}{1-x} \right) \left(p(x,y) - \frac{1}{N}\partial_y p(x,y) + \frac{1}{2N^2} \partial_y^2 p(x,y)\right) \\
& +\left(\frac{y}{1-x} + \frac{1}{N} \frac{1}{1-x} \right)\left(p(x,y) + \frac{1}{N}\partial_y p(x,y) + \frac{1}{2N^2} \partial_y^2 p(x,y)\right) \\
& + O\left(\frac{1}{N^3}\right)
\end{align*}
}
which simplifies to 
\begin{equation} \label{eq:masterContinuum}
\begin{aligned}
 \frac{1}{N} \partial_x p(x,y) + \frac{1}{2N^2} \partial_x^2 p(x,y) = & -\frac{1}{N}\partial_y\left( \left(1- \frac{2y}{1-x}\right)p(x,y) \right) \\
 & + \frac{1}{2N^2} \partial_y^2 p(x,y) + O\left(\frac{1}{N^3}\right).
 \end{aligned}
\end{equation}

Let's look at the first order terms. We'll prove the following
\begin{prop} \label{prop:pde1}
The solution to 
\[
 \partial_x p + \partial_y\left( \left(1- \frac{2y}{1-x}\right)p(x,y) \right) = 0
\]
with initial condition $p(0,y)=\delta(y-y_0)$ is given by
\[
p(x,y) = \delta\left(y-x(1-x) -y_0(1-x)^2\right).
\]
\end{prop}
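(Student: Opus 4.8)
The plan is to treat this as a continuity (transport) equation $\partial_x p + \partial_y\big(a(x,y)\,p\big)=0$ with velocity field $a(x,y)=1-\tfrac{2y}{1-x}$, and solve it by the method of characteristics, reducing the PDE to the ODE $\tfrac{dy}{dx}=a(x,y)$ along integral curves.

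First I would solve the characteristic ODE. Rewriting it as $\tfrac{dy}{dx}+\tfrac{2}{1-x}\,y=1$, this is linear in $y$ with integrating factor $\mu(x)=\exp\!\big(\int \tfrac{2}{1-x}\,dx\big)=(1-x)^{-2}$. Multiplying through gives $\tfrac{d}{dx}\big((1-x)^{-2}y\big)=(1-x)^{-2}$, hence $(1-x)^{-2}y=(1-x)^{-1}+C$, i.e. $y(x)=(1-x)+C(1-x)^2$. Imposing $y(0)=y_0$ forces $C=y_0-1$, so the characteristic issuing from $y_0$ is $g(x):=x(1-x)+y_0(1-x)^2$, which is exactly the location of the claimed delta mass. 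Next I would verify that $p(x,y)=\delta(y-g(x))$ solves the PDE: either invoke the standard fact that solutions of the continuity equation are the pushforwards of the initial measure under the characteristic flow (so the point mass at $y_0$ travels to the point mass at $g(x)$ with total mass preserved), or differentiate the ansatz in the sense of distributions. For the latter, set $h(y)=1-\tfrac{2y}{1-x}$ and use $h(y)\delta'(y-g)=h(g)\delta'(y-g)-h'(g)\delta(y-g)$; then $\partial_y\big(h(y)\delta(y-g)\big)=h'(y)\delta(y-g)+h(y)\delta'(y-g)=h(g)\,\delta'(y-g)$, the two $\tfrac{2}{1-x}\delta(y-g)$ terms cancelling, while $\partial_x p=-g'(x)\,\delta'(y-g)$. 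Thus the equation holds iff $g'(x)=h(g(x))=1-\tfrac{2g(x)}{1-x}$, which is precisely the characteristic equation already satisfied, and $g(0)=y_0$ gives the initial condition.

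To conclude that this is \emph{the} solution, I would note that the velocity field $a(x,y)$ is smooth (indeed affine in $y$) on the strip $0\le x<1$, so the characteristic flow is well defined and the measure-valued solution of the continuity equation with prescribed initial data is unique, pinning down $p$. I do not anticipate a serious obstacle here: once the characteristic ODE is identified the computation is routine. The only point deserving care is the bookkeeping of the distributional derivatives — equivalently, checking that no Jacobian factor enters in front of the delta — and I would display that cancellation explicitly rather than rely solely on the abstract pushforward statement.
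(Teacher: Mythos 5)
Your proposal is correct and takes essentially the same approach as the paper: both arguments are the method of characteristics and arrive at the same characteristic curves $y=x(1-x)+y_0(1-x)^2$. The only difference is in how the normalization of the delta is handled --- the paper carries the amplitude $p\propto(1-x)^{-2}$ along the characteristics and cancels it against the Jacobian via the scaling identity for $\delta$, while you verify the unit-mass ansatz $\delta(y-g(x))$ directly in the distributional sense; both are fine.
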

\begin{proof}
We can solve this by characteristics. First note that we can rewrite the pde as 
\[
p = \frac{1-x}{2} \partial_x p + \frac{1-x-2y}{2}\partial_y p.
\]
Suppose there is a curve $x=x(r),y=y(r)$ with $x(0)=0, y(0)=y_0$, such that $\frac{dp}{dr} = p$. We have
\[
p = \partial_x p \frac{dx}{dr} + \partial_y p \frac{dy}{dr}
\]
which implies 
\[
\begin{aligned}
& \frac{dx}{dr} =  \frac{1-x}{2} \\
& \frac{dy}{dr} = \frac{1-x-2y}{2}.
\end{aligned}
\]
This pair of equations can be solved to give
\[
\begin{aligned}
& x(r) = 1-e^{-\frac{1}{2}r} \\
& y(r) = e^{-\frac{1}{2}r} - e^{-r} + y_0 e^{-r} = x(1-x)+y_0(1-x)^2.
\end{aligned}
\]
Note a bit of algebra shows $r=-2\log\left(1-x\right)$. Along this characteristic curve we see that
\[
\frac{dp}{dr} = p \implies p(r) = p_0 e^r = p_0 \frac{1}{(1-x)^2}
\]
where $p_0$ is constant along the characteristics. Since $x(0)=0$ this means that $p_0$ can only depend on $y_0$, that is, $p_0 = F(y_0)=F\left(\frac{y-x(1-x)}{(1-x)^2}\right)$, for an arbitrary function $F$. Finally, using the initial condition $p(0,y)=\delta(y-y_0)$, we see that $F(y) =\delta(y-y_0)$ and thus
\[
\begin{aligned}
p(x,y) = & \frac{1}{(1-x)^2}F\left(\frac{y-x(1-x)}{(1-x)^2}\right) \\
= & \frac{1}{(1-x)^2} \delta\left(\frac{y-x(1-x)}{(1-x)^2} - y_0\right) \\
= & \delta\left(y-x(1-x)-y_0(1-x)^2\right).
\end{aligned}
\]
\end{proof}
So we see that for large $N$ the distribution of the random walk, with $p(0,y)=\delta(y-y_0)$ initial condition, concentrates along the curve $h(x)=x(1-x)+y_0(1-x)^2$. To compare this with the results from the previous section, note that in the above limit equation (\ref{eq:smallMoments}) becomes
{\small
\begin{align*}
& \mathbb{E}\left[\frac{1}{N} H(Nx)\right] = x(1-x) + y_0 (1-x)^2 + O\left(\frac{1}{N}\right) \\
& Var\left[\frac{1}{N}H(Nx)\right] = \frac{1}{N} \left(2x^2(1-x)^2 +2y_0x(2-3x)(1-x)^2 -4y_0^2 x(1-x)^3 \right) +  O\left(\frac{1}{N^2}\right)
\end{align*}
}
in agreement with Proposition (\ref{prop:pde1}). 

We're now interested in the fluctuations of the random around this limiting curve. To simplify the calculations, we will consider only the case when $y_0=0$, although the general case can be done similarly. It is expected that fluctuations should appear at a length scale of $\frac{1}{\sqrt{N}}$. We will see that at this scale we do in fact find nontrivial behavior. To this end, let $\tilde y$ be the fluctuation away from the mean. Consider the change of coordinates
\[
y = x(1-x) +\frac{1}{\sqrt{N}}\tilde y \implies \tilde y = \sqrt{N} (y-x(1-x)).
\]
We want to describe the distribution of the fluctuations $p(x,\tilde y)$. The following proposition does the job.
\begin{prop}\label{prop:pde2}
The distribution of the fluctuations satisfies
\[
\partial_x p = \partial_{\tilde y} \left( \frac{2\tilde y}{1-x} p\right) + 2x(1-x)\partial_{\tilde y}^2p
\] 
with initial condition $p(0,\tilde y) = \delta(\tilde y)$. This is solved by
\[
p(x,\tilde y) = \frac{1}{\sqrt{2\pi \sigma^2(x)}} e^{-\frac{\tilde y^2}{2\sigma^2(x)}}
\]
where $\sigma^2(x) = 2x^2(1-x)^2$.
\end{prop}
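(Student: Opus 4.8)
The plan is to follow the same two–step pattern as in Proposition~\ref{prop:pde1}: first extract the stated PDE as the order-$\tfrac1N$ part of the continuum master equation~(\ref{eq:masterContinuum}) after passing to the fluctuation coordinate, and then solve that PDE with a Gaussian ansatz.

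\textbf{Deriving the equation.} I would take~(\ref{eq:masterContinuum}) with $y_0=0$ and substitute $y=x(1-x)+\tfrac{1}{\sqrt N}\tilde y$ at fixed $x$, so that $\tilde y=\sqrt N\,(y-x(1-x))$. The chain rule gives $\partial_y\big|_x=\sqrt N\,\partial_{\tilde y}$ and $\partial_x\big|_y=\partial_x-\sqrt N\,(1-2x)\,\partial_{\tilde y}$, so $\partial_x^2\big|_y$ contains the term $N(1-2x)^2\,\partial_{\tilde y}^2$. The point worth emphasizing is that $\tfrac{1}{2N^2}\partial_x^2 p$, which is negligible in the original coordinates, is promoted to order $\tfrac1N$ by this $\sqrt N$-sized frame shift; together with $\tfrac{1}{2N^2}\partial_y^2 p=\tfrac{1}{2N}\partial_{\tilde y}^2 p$ coming from the right-hand side it contributes a diffusion coefficient $\tfrac12\bigl(1-(1-2x)^2\bigr)=2x(1-x)$. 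At the same time the drift term $-\tfrac1N\partial_y\bigl((1-\tfrac{2y}{1-x})p\bigr)$ splits as $-\tfrac{1-2x}{\sqrt N}\partial_{\tilde y}p+\tfrac{2}{N(1-x)}\partial_{\tilde y}(\tilde y p)$, and the two $O(N^{-1/2})$ pieces — one from this drift splitting, one from $\partial_x\big|_y$ on the left-hand side, both equal to $-\tfrac{1-2x}{\sqrt N}\partial_{\tilde y}p$ — cancel exactly; this cancellation is precisely the statement that $x(1-x)$ is the correct centering, and it is the bookkeeping that has to be carried out with care. Collecting the surviving order-$\tfrac1N$ terms leaves $\partial_x p=\tfrac{2}{1-x}\partial_{\tilde y}(\tilde y p)+2x(1-x)\partial_{\tilde y}^2 p$, with initial condition $p(0,\tilde y)=\delta(\tilde y)$ inherited from the concentration established in Proposition~\ref{prop:pde1}. (Equivalently, one may rerun the $\tfrac1N$-expansion of the discrete master equation~(\ref{eq:masterDiscrete}) directly on the ansatz $Y=Nx(1-x)+\sqrt N\,\tilde y$, where the same $O(N^{-1/2})$ cancellation and the same $(1-2x)^2$ contribution to the diffusion appear.)

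\textbf{Solving the equation.} The resulting equation is a linear Fokker–Planck equation whose drift is linear in $\tilde y$ and whose diffusion coefficient is independent of $\tilde y$, hence it propagates Gaussians. I would insert $p(x,\tilde y)=(2\pi\sigma^2(x))^{-1/2}\exp(-\tilde y^2/2\sigma^2(x))$ and match the coefficients of $\tilde y^0$ and $\tilde y^2$; both reduce to the single scalar ODE $\tfrac{d}{dx}\sigma^2(x)=-\tfrac{4}{1-x}\sigma^2(x)+4x(1-x)$ with $\sigma^2(0)=0$, which is also just the equation for the second moment of the PDE. Solving it with the integrating factor $(1-x)^{-4}$ gives $\sigma^2(x)=2(1-x)^2-4(1-x)^3+2(1-x)^4=2x^2(1-x)^2$, and $\sigma^2(0)=0$ is exactly what makes the Gaussian reduce to $\delta(\tilde y)$ at $x=0$. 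As an independent check, this $\sigma^2(x)$ agrees with the $N\to\infty$ limit of the variance of $\tfrac1N H(Nx)$ read off from~(\ref{eq:smallMoments}) with $Y_0=0$.

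\textbf{Main obstacle.} The only real difficulty is the order bookkeeping in the change of variables: one must notice that the $\partial_x^2$ term is not negligible, because the fluctuation frame moves at speed $O(\sqrt N)$ and feeds that term into the diffusion coefficient, and one must verify that the $O(N^{-1/2})$ terms cancel. Once the PDE is in hand, solving a first-order linear ODE and verifying the Gaussian ansatz is routine.
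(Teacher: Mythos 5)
Your proposal is correct and follows essentially the same route as the paper: the same change of variables $y=x(1-x)+\tfrac{1}{\sqrt N}\tilde y$, the same promotion of $\tfrac{1}{2N^2}\partial_x^2$ to order $\tfrac1N$ via the $N(1-2x)^2\partial_{\tilde y}^2$ term, the same cancellation of the $O(N^{-1/2})$ drift pieces, and the same collection of surviving terms into the stated Fokker--Planck equation. The only difference is cosmetic: where the paper merely asserts that the Gaussian "can easily be verified" to solve the equation, you derive and solve the variance ODE $\tfrac{d}{dx}\sigma^2=-\tfrac{4}{1-x}\sigma^2+4x(1-x)$ explicitly, which is a slightly more constructive version of the same check.
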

\begin{proof}
First note that under the change of coordinates above we have
\[
\begin{aligned}
& \partial_x \to \partial_x - \sqrt{N} (1-2x) \partial_{\tilde y} \\
& \partial^2_x \to \partial^2_x - 2\sqrt{N} (1-2x) \partial_{x\tilde y} + 4 \sqrt{N} \partial_{\tilde y} + N (1-2x)^2 \partial^2_{\tilde y}\\
& \partial_y \to \sqrt{N}\partial_{\tilde y} \\
& \partial^2_y \to N\partial^2_{\tilde y}
\end{aligned}
\]
as well as $1-\frac{2y}{1-x} = 1- 2x - \frac{1}{\sqrt{N}}\frac{2\tilde y}{1-x}$. Under these changes equation (\ref{eq:masterContinuum}) becomes
\[
\begin{aligned}
& \frac{1}{N}\partial_x p - \frac{1}{\sqrt{N}} (1-2x) \partial_{\tilde y}p + \frac{1}{2N} (1-2x)^2 \partial^2_{\tilde y}p \\
= &-\frac{1}{\sqrt{N}} \partial_{\tilde y} \left(\left(1-2x-\frac{1}{\sqrt{N}}\frac{2\tilde y}{1-x}\right)p \right) +\frac{1}{2N}\partial^2_{\tilde y}p + O\left( \frac{1}{N^\frac{3}{2}}\right)
\end{aligned}
\]
keeping only terms up to order $\frac{1}{N}$. This simplifies to
\[
\partial_x p =  \partial_{\tilde y} \left( \frac{2\tilde y}{1-x} p\right) + 2x(1-x)\partial_{\tilde y}^2p
\]
as desired. One can easily verify that
\[
p(x,\tilde y) = \frac{1}{\sqrt{2\pi \sigma^2(x)}} e^{-\frac{\tilde y^2}{2\sigma^2(x)}},
\]
with $\sigma^2(x) = 2x^2(1-x)^2$, solves this differential equation with the given initial condition.
\end{proof}
An identical calculation when $y_0\ne 0$ results in a solution of the same form with 
\begin{equation}\label{eq:varVar}
\sigma^2(x) = 2x(1-x)^2 + 2y_0x(2-3x)(1-x)^2 - 4y_0^2x(1-x)^3
\end{equation}
Note that the solution above is in fact a probability distribution as 
\[
\int_0^1\int_{-\infty}^{\infty} \; p(x,\tilde y) \; d\tilde y dx =1.
\]
 Note as well that for any fixed $x$, the marginal distribution of the fluctuation at $x$ is Gaussian with variance $2x^2(1-x)^2$. In particular, the variance goes to zero at $x=1$ as expected.

Continuing, we can ask about the joint distribution of the fluctuations at different points $x_1,\ldots,x_n$.
\begin{prop}
For any finite number of points $x_1<x_2<\ldots<x_n$, the joint distribution of the fluctuations are multivariate Gaussian with covariance matrix 
\[
\mathbf{C} = \left[2\min(x_i,x_j)^2(1-\max(x_i,x_j))^2 \right]_{i,j=1}^n.
\]
\end{prop}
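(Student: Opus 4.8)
\emph{Proof plan.} The plan is to exploit the Markov property of the walk, exactly as at the discrete level, so that the $n$-point fluctuation density factors into a product of transition kernels, and then to observe that in the continuum limit each such kernel is Gaussian with a conditional mean that is linear in the conditioning variable; multivariate Gaussianity then follows from the standard sequential characterization of Gaussian vectors.

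First I would repeat the change of variables $y = x(1-x) + \frac{1}{\sqrt N}\tilde y$ and the $1/N$-expansion of Proposition \ref{prop:pde2}, but applied now to the transition density $p_{s\to t}(\tilde y_s,\tilde y_t) = \mathbb{P}[\tilde y(t)=\tilde y_t \mid \tilde y(s)=\tilde y_s]$ rather than to $p(0,\tilde y)$. Since the drift $1-\tfrac{2y}{1-x}$ and the diffusion coefficient appearing in (\ref{eq:masterContinuum}) depend only on the current position $(x,y)$ and not on the starting point of the walk, the transition density obeys the very same Fokker--Planck equation $\partial_t p = \partial_{\tilde y_t}\big(\tfrac{2\tilde y_t}{1-t}p\big) + 2t(1-t)\,\partial_{\tilde y_t}^2 p$, with initial data $p_{s\to s}(\tilde y_s,\cdot)=\delta(\cdot-\tilde y_s)$. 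This is the forward equation of the linear, Ornstein--Uhlenbeck type diffusion $d\tilde y = -\tfrac{2\tilde y}{1-x}\,dx + 2\sqrt{x(1-x)}\,dB_x$; taking moments of this equation (as in Proposition \ref{prop:recMoments}), or solving by characteristics as in Proposition \ref{prop:pde1}, shows that $p_{s\to t}(\tilde y_s,\cdot)$ is Gaussian with mean $\tfrac{(1-t)^2}{(1-s)^2}\,\tilde y_s$ and some variance $v(s,t)$.

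Next, by the Markov property the joint fluctuation density factors as
\[
p(x_1,\tilde y_1;\dots;x_n,\tilde y_n) \;=\; p(x_1,\tilde y_1)\,\prod_{i=1}^{n-1} p_{x_i\to x_{i+1}}(\tilde y_i,\tilde y_{i+1}),
\]
where $p(x_1,\tilde y_1)$ is the one-point Gaussian of Proposition \ref{prop:pde2}. A random vector whose law factors this way, with each conditional distribution Gaussian and each conditional mean an affine (here linear) function of the conditioning coordinates, is multivariate Gaussian. Hence the finite-dimensional distributions of $\tilde y(\cdot)$ are Gaussian, and it remains only to identify the covariance. For $x_i \le x_j$, conditioning on $\tilde y(x_i)$ and using the linear conditional mean gives
\begin{align*}
\mathbb{E}[\tilde y(x_i)\tilde y(x_j)] &= \mathbb{E}\big[\tilde y(x_i)\,\mathbb{E}[\tilde y(x_j)\mid \tilde y(x_i)]\big] = \frac{(1-x_j)^2}{(1-x_i)^2}\,\mathbb{E}[\tilde y(x_i)^2] \\
&= \frac{(1-x_j)^2}{(1-x_i)^2}\cdot 2x_i^2(1-x_i)^2 = 2x_i^2(1-x_j)^2,
\end{align*}
using $\mathbb{E}[\tilde y(x_i)^2]=\sigma^2(x_i)=2x_i^2(1-x_i)^2$ from Proposition \ref{prop:pde2}. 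Since $x_i=\min(x_i,x_j)$ and $x_j=\max(x_i,x_j)$, this is exactly the claimed entry of $\mathbf{C}$. As a consistency check, one may instead take the scaling limit $X_k=Nx_k$ of the exact covariance (\ref{eq:covariance}) with $Y_0=0$ and rescale by $1/N$ (the $\tfrac{1}{\sqrt N}$ normalization entering each factor), recovering the same expression; the $y_0\ne 0$ analogue replaces $\sigma^2$ by (\ref{eq:varVar}).

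The main obstacle is the one already present in Section \ref{sec:3}: the whole argument lives at the level of formal $1/N$-expansions of the master equation, so a fully rigorous statement would require an honest proof that the rescaled Markov chain converges in finite-dimensional distributions to the diffusion above --- for instance via a martingale central limit theorem, checking that the predictable quadratic variation of the martingale part of $\tilde y(\cdot)$ converges to $\int 4x(1-x)\,dx$ and that a Lindeberg condition holds. If one is content to remain at the formal level of Propositions \ref{prop:pde1} and \ref{prop:pde2}, the only remaining work is the bookkeeping in the $1/N$-expansion of the joint master equation and the verification that the Gaussian ansatz with covariance $\mathbf{C}$ solves it, which is routine.
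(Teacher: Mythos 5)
Your proposal is correct and shares its skeleton with the paper's proof --- both rest on the Markov property and on the fact that the one-step conditional law $\mathbb{P}[\tilde y_j \mid \tilde y_i]$ is the Gaussian obtained by restarting the walk at $x_i$ at height $y_i = x_i(1-x_i) + \tfrac{1}{\sqrt N}\tilde y_i$ and applying Proposition \ref{prop:pde2} (with the general-$y_0$ variance (\ref{eq:varVar})). Where you genuinely diverge is in how multivariate Gaussianity and the covariance are identified. The paper writes the induction step as an explicit product of densities and verifies that this product equals the claimed $n$-dimensional Gaussian by matching quadratic forms; this forces it to first establish Lemma \ref{lem:covarianceMatrix}, i.e.\ the determinant formula $\det \mathbf{C}_{(n)} = c_{1,n}\prod_i z_{i-1,i}$ and the tridiagonal block structure of $\mathbf{C}_{(n)}^{-1}$. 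You sidestep that machinery entirely: from the linearity of the conditional mean ($\mathbb{E}[\tilde y_j\mid\tilde y_i] = \tfrac{(1-x_j)^2}{(1-x_i)^2}\tilde y_i$, which indeed follows from the drift $-\tfrac{2\tilde y}{1-x}$, or equivalently from the $\tilde y_{n-1}$-coefficient in the paper's conditional mean) and the deterministic conditional variance, the sequential characterization of Gaussian vectors gives joint Gaussianity for free, and the tower property gives $\mathbb{E}[\tilde y_i\tilde y_j] = \tfrac{(1-x_j)^2}{(1-x_i)^2}\cdot 2x_i^2(1-x_i)^2 = 2x_i^2(1-x_j)^2$ in one line, matching both the claimed $\mathbf{C}$ and the $N\to\infty$ limit of (\ref{eq:covariance}). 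Your route is cleaner and avoids the matrix-inversion lemma altogether; what it gives up is the explicit form of the inverse covariance, which the paper's computation produces as a byproduct and which connects to the Laplace-type operator of Section \ref{sec:4}. One small point you gloss over, which the paper treats explicitly: the conditional variance of the restarted walk does depend on $y_{n-1}$ at finite $N$, and one must argue (as the paper does) that replacing $y_{n-1}$ by its deterministic value $x_{n-1}(1-x_{n-1})$ only changes the variance at order $N^{-1/2}$; your SDE formulation assumes this implicitly. Both arguments live at the same formal level of the $1/N$ expansion, as you correctly note.
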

To prove this we first need some facts about the matrix $C$ given above.
\begin{lem}\label{lem:covarianceMatrix}
For $x_n<x_m$, define $z_{n,m} = 2(x_m-x_n)(x_m-2x_mx_n+x_n)$ and $c_{n,m} = 2x_n^2(1-x_m)^2$. Let the covariance matrix for $n$ points $x_1<\ldots<x_n$ be $\mathbf{C}_{(n)}$.  It has the properties
\[
\begin{aligned}
& det\left(\mathbf{C}_{(n)}\right) = c_{1,n} \prod_{i=2}^{n} z_{i-1,i} \\
& \mathbf{C}_{(n)}^{-1} = \mathbf{A} + \mathbf{B}, \hspace{1cm} \mathbf{C}_{(1)}^{-1} = \frac{1}{2x_1^2(1-x_1)^2}
\end{aligned}
\]
where $\mathbf{A}$ has $\mathbf{C}_{(n-1)}^{-1}$ in the upper-left corner and zeros elsewhere, $\mathbf{B}$ has a nonzero $2\times 2$ block  $\mathbf{M} =\begin{pmatrix}\frac{(1-x_n)^2}{(1-x_{n-1})^2 z_{n-1,n}} & -\frac{1}{z_{n-1,n}} \\-\frac{1}{z_{n-1,n}} & \frac{(1-x_{n-1})^2}{(1-x_{n})^2 z_{n-1,n}}  \end{pmatrix}$ in the lower-right corner and zeros elsewhere.
\end{lem}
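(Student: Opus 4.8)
The plan is to verify the two stated formulas directly by induction on $n$, using the block structure that the lemma already lays out for us. Both claims reduce to elementary linear-algebra identities, so the work is bookkeeping rather than genuine insight; I will organize it so that the algebra stays manageable.

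\textbf{Step 1: the determinant.} I would expand along the last row (or column) of $\mathbf{C}_{(n)}$, or better, perform the row/column operation that subtracts an appropriate multiple of row $n-1$ from row $n$ so that the new last row has a single nonzero entry. Concretely, since the entries of the last two rows are $C_{n-1,j}=2x_j^2(1-x_{n-1})^2$ and $C_{n,j}=2x_j^2(1-x_n)^2$ for $j\le n-1$, and $C_{n-1,n}=2x_{n-1}^2(1-x_n)^2$, $C_{n,n}=2x_n^2(1-x_n)^2$, the combination $\mathrm{row}_n - \frac{(1-x_n)^2}{(1-x_{n-1})^2}\mathrm{row}_{n-1}$ kills all entries $j\le n-1$ and leaves in position $(n,n)$ exactly $2x_n^2(1-x_n)^2 - \frac{(1-x_n)^2}{(1-x_{n-1})^2}\cdot 2x_{n-1}^2(1-x_n)^2 = \frac{(1-x_n)^4}{(1-x_{n-1})^2}\bigl(x_n^2-x_{n-1}^2\bigr)\cdot\frac{1}{?}$; I would then simplify this to check it equals $z_{n-1,n}\cdot\frac{c_{1,n}}{c_{1,n-1}}$ after a further column operation, giving $\det \mathbf{C}_{(n)} = \bigl(\det \mathbf{C}_{(n-1)}\bigr)\cdot$ (that factor), and the telescoping product $c_{1,n}\prod_{i=2}^n z_{i-1,i}$ then follows by the inductive hypothesis. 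The base case $n=1$ is immediate.

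\textbf{Step 2: the inverse.} Here I would simply verify that $(\mathbf{A}+\mathbf{B})\mathbf{C}_{(n)} = I_n$ by multiplying out in blocks. Partition $\mathbf{C}_{(n)}$ as $\begin{pmatrix}\mathbf{C}_{(n-1)} & \mathbf{v}\\ \mathbf{v}^T & C_{n,n}\end{pmatrix}$, where $\mathbf{v}=(2x_1^2(1-x_n)^2,\ldots,2x_{n-1}^2(1-x_n)^2)^T$; crucially $\mathbf{v} = \frac{(1-x_n)^2}{(1-x_{n-1})^2}\,\mathbf{e}_{n-1}$-scaled copy of the last column of $\mathbf{C}_{(n-1)}$, i.e. $\mathbf{C}_{(n-1)}^{-1}\mathbf{v}$ is a multiple of $\mathbf{e}_{n-1}$. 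That single observation makes all four block products collapse: $\mathbf{A}$ contributes $I_{n-1}$ in the top-left and $\mathbf{C}_{(n-1)}^{-1}\mathbf{v}$ (a multiple of $\mathbf{e}_{n-1}$) in the top-right, while $\mathbf{B}$, being supported on the bottom-right $2\times 2$ block $\mathbf{M}$, only interacts with rows/columns $n-1,n$, and one checks the leftover terms cancel. The remaining scalar checks are: (i) the $(n,n)$ entry of the product is $1$, (ii) the $(n,j)$ entries for $j<n-1$ vanish, (iii) the $(n-1,n)$ and $(n,n-1)$ mixed entries vanish. Each of these is a one- or two-line computation with $z_{n-1,n}$ and the explicit entries of $\mathbf{M}$.

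\textbf{Main obstacle.} There is no conceptual difficulty; the risk is purely algebraic friction in Step 1, namely matching the messy factor coming out of the row-reduction to the clean closed form $z_{n-1,n}$ and the ratio $c_{1,n}/c_{1,n-1}$. I would mitigate this by first doing a column operation on $\mathbf{C}_{(n)}$ symmetric to the row operation (subtract $\frac{(1-x_n)^2}{(1-x_{n-1})^2}$ times column $n-1$ from column $n$), which by symmetry of $\mathbf{C}_{(n)}$ produces a block-diagonal matrix $\mathrm{diag}(\mathbf{C}_{(n-1)},\,\delta_n)$ with $\delta_n$ a single scalar, so that $\det\mathbf{C}_{(n)}=\delta_n\det\mathbf{C}_{(n-1)}$ and one only has to identify $\delta_n = z_{n-1,n}\,(1-x_n)^2/(1-x_{n-1})^2 \cdot x_n^2/x_{n-1}^2$-type expression — which, after expanding $z_{n-1,n}=2(x_n-x_{n-1})(x_n-2x_nx_{n-1}+x_{n-1})$, is a routine factorization check. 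For Step 2, the cross-checks (ii) and (iii) are the places where a sign or an index is easy to drop, so I would write them out carefully rather than assert them.
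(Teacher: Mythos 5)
Your proposal is correct, but it is organized differently from the paper's proof, and the comparison is worth recording. The paper works in the opposite direction: it first writes down the tridiagonal matrix $\mathbf{C}_{(n)}^{-1}$ explicitly (the recursion $\mathbf{C}_{(n)}^{-1}=\mathbf{A}+\mathbf{B}$ forces tridiagonality) and verifies it is the inverse by direct multiplication, and then obtains the determinant by row-reducing that \emph{inverse} to upper-triangular form and reading off the product of the diagonal entries $\bigl(\tfrac{x_2^2}{x_1^2 z_{1,2}},\ldots,\tfrac{1}{2x_n^2(1-x_n)^2}\bigr)$. You instead compute $\det\mathbf{C}_{(n)}$ directly by a Schur-complement-style reduction on $\mathbf{C}_{(n)}$ itself, and verify the inverse by block multiplication organized around the single structural fact that the bordering column $\mathbf{v}=(2x_j^2(1-x_n)^2)_{j<n}$ is the scalar multiple $\tfrac{(1-x_n)^2}{(1-x_{n-1})^2}$ of the last column of $\mathbf{C}_{(n-1)}$, so that $\mathbf{C}_{(n-1)}^{-1}\mathbf{v}=\tfrac{(1-x_n)^2}{(1-x_{n-1})^2}\mathbf{e}_{n-1}$. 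That observation is the real content of the lemma and your route makes it explicit, which the paper's ``compute and check'' does not; it also explains \emph{why} the inverse acquires only a $2\times2$ correction at each step (a bordered-inverse formula collapsing because the Schur complement is a scalar and the correction vector is supported on one coordinate). The paper's route has the minor advantage that the determinant of a tridiagonal matrix reduces cleanly by its standard three-term recursion, but your row operation $\mathrm{row}_n-\tfrac{(1-x_n)^2}{(1-x_{n-1})^2}\mathrm{row}_{n-1}$ already annihilates all of row $n$ except the $(n,n)$ entry, whose value is
\[
\frac{(1-x_n)^2}{(1-x_{n-1})^2}\Bigl(2x_n^2(1-x_{n-1})^2-2x_{n-1}^2(1-x_n)^2\Bigr)
=\frac{(1-x_n)^2}{(1-x_{n-1})^2}\,z_{n-1,n}=\frac{c_{1,n}}{c_{1,n-1}}\,z_{n-1,n},
\]
using the factorization $x_n^2(1-x_{n-1})^2-x_{n-1}^2(1-x_n)^2=(x_n-x_{n-1})(x_n+x_{n-1}-2x_nx_{n-1})$; this pins down the one quantity you left with a question mark, and cofactor expansion along the new last row then gives the telescoping product without any further column operation. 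All the cancellations you flag in Step 2 do check out (the $(n-1,n)$ entry of $\mathbf{A}\mathbf{C}_{(n)}$ is $+\tfrac{(1-x_n)^2}{(1-x_{n-1})^2}$ and the corresponding entry of $\mathbf{B}\mathbf{C}_{(n)}$ is its negative, by the same factorization), so the argument is complete once those lines are written out.
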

\begin{proof}
It is easy to see the lemma implies $\mathbf{C}_{(n)}^{-1}$ is a tridiagonal matrix. One can compute explicitly the form of this matrix, then check it is in fact the inverse.

Now we'll take the determinant of the inverse. Applying the sequence of row operations $r_2\to r_2+\frac{x_1^2}{x_2^2}r_1,\ldots, r_n\to r_n+\frac{x_{n-1}^2}{x_n^2}r_{n-1}$ and simplifying results in an upper triangular matrix with diagonal $(\frac{x_2^2}{x_1^2 z_{1,2}},\ldots, \frac{x_n^2}{x_{n-1}^2 z_{n-1,n}},\frac{1}{2x_n^2(1-x_n)^2})$, from which we get the desired formula.
\end{proof}

Now using this, we return to the proposition.
\begin{proof}[Proof of proposition:]
First let's suppose the proposition is true. Let $\vec{y}^T_{(n)}= (\tilde y_1, \ldots , \tilde y_n)$. If the proposition holds then
{\
\begin{equation} \label{eq:jointStep}
\begin{aligned}
\mathbb{P}[\vec{y}_{(n)}] = & \frac{1}{\sqrt{(2\pi)^n|\mathbf{C}_{(n)}|}} e^{-\frac{1}{2}\vec{y}_{(n)}^T\mathbf{C}_{(n)}^{-1}\vec{y}_{(n)}}  \\
= & \frac{1}{\sqrt{(2\pi)^{n-1}|\mathbf{C}_{(n-1)}|}} \frac{1}{\sqrt{2\pi c_{1,n}c_{1,n-1}^{-1}z_{n-1,n}}}  e^{-\frac{1}{2}\vec{y}_{(n-1)}^T\mathbf{C}_{(n-1)}^{-1}\vec{y}_{(n-1)}} \\
& \hspace{2cm} \times exp\left(-\frac{1}{2} \begin{pmatrix} \tilde y_{n-1} &  \tilde y_{n} \end{pmatrix}  \mathbf{M}   \begin{pmatrix} \tilde y_{n-1} \\  \tilde y_{n} \end{pmatrix}\right) 
\end{aligned}
\end{equation}
}
where we use the lemma (as well as the notation) from above.

Now we prove the proposition by induction. We know this holds for a single point. Now assume it holds for $n-1$ points. We have
{
\[
\begin{aligned}
\mathbb{P}[\vec{y}_{(n)}] & = \mathbb{P}[\vec{y}_{(n-1)}] \mathbb{P}[\tilde y_n |  \tilde y_{n-1}]  \\
& = \frac{e^{-\frac{1}{2}\vec{y}^T_{(n-1)}\mathbf{C}^{-1}_{(n-1)}\vec{y}_{(n-1)}}}{\sqrt{(2\pi)^{n-1} |\mathbf{C}_{(n-1)}|}}  \frac{1}{\sqrt{2\pi \sigma^2(x_n)}} e^{-\frac{N\left(y_n -\frac{(x_n-x_{n-1})(1-x_n)}{1-x_{n-1}} - y_{n-1}\frac{(1-x_n)^2}{(1-x_{n-1})^2}\right)^2}{2\sigma^2(x_n)}}
\end{aligned}
\]
}
where the second factor comes from considering a walk of (continuum) length $1-x_{n-1}$ beginning at height $y_{n-1} = x_{n-1}(1-x_{n-1}) + \frac{1}{\sqrt{N}}\tilde y_{n-1}$. In this case,
{\small
\[
\begin{aligned}
\sigma^2(x_n) =  \frac{2(x_n-x_{n-1})(1-x_n)^2}{(1-x_{n-1})^3} & + y_{n-1} \frac{2(x_n-x_{n-1})(1-x_n)^2(2+x_{n-1}-3x_n)}{(1-x_{n-1})^4} \\
& - y_{n-1}^2\frac{4(x_n-x_{n-1})(1-x_n)^3}{(1-x_{n-1})^5}.
\end{aligned}
\]
}
This can be greatly simplified by replacing $y_{n-1}$ in the variance by $x_{n-1}(1-x_{n-1})$ since any corrections are of sub-leading order. We then have
\[
\sigma^2(x_n) = \frac{2(x_n-x_{n-1})(1-x_n)^2(x_n-2x_nx_{n-1}+x_{n-1})}{(1-x_{n-1})^2} = \frac{c_{1,n}}{c_{1,n-1}}z_{n-1,n}
\]
plus terms of size $O\left(\frac{1}{\sqrt{N}}\right)$. Comparing with equation (\ref{eq:jointStep}), it is left to show that
\[
-\frac{N\left(y_n - \frac{(x_n-x_{n-1})(1-x_n)}{1-x_{n-1}} - y_{n-1}\frac{(1-x_n)^2}{(1-x_{n-1})^2}\right)^2}{2\sigma^2(x_n)} = -\frac{1}{2} \begin{pmatrix} \tilde y_{n-1} &  \tilde y_{n} \end{pmatrix}  \mathbf{M}   \begin{pmatrix} \tilde y_{n-1} \\  \tilde y_{n} \end{pmatrix}
\]
but this is a simple calculation.
\end{proof}

Together the above two propositions, \ref{prop:pde1} and \ref{prop:pde2}, prove the following:
\begin{thm}
The fluctuations of this random walk around its mean $h_0(x)=x(1-x)$ form a Gaussian process with covariance function
\[
C(x_i,x_j) =2\min(x_i,x_j)^2(1-\max(x_i,x_j))^2.
\]
\end{thm}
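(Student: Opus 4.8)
The plan is to recognize that the substantive analysis has already been done, so the theorem is a matter of packaging. Recall that a real-valued stochastic process $\{\tilde y(x)\}_{0<x<1}$ is, by definition, Gaussian exactly when every finite-dimensional marginal — the joint law of $(\tilde y(x_1),\ldots,\tilde y(x_n))$ for any $x_1<\cdots<x_n$ — is multivariate Gaussian, in which case the process is completely determined by its mean function $x\mapsto\mathbb{E}[\tilde y(x)]$ and its covariance $C(x_i,x_j)=Cov[\tilde y(x_i),\tilde y(x_j)]$. So I would simply verify these two facts for the limiting fluctuation field $\tilde y(x)=\lim_{N\to\infty}\sqrt N\bigl(\tfrac1N H(Nx)-x(1-x)\bigr)$ (taking $y_0=0$).

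For the mean, Proposition \ref{prop:pde1} with $y_0=0$ says the rescaled walk concentrates on $h_0(x)=x(1-x)$, so after centering the limiting process has mean identically zero. For the finite-dimensional distributions, the preceding proposition on the joint law of the fluctuations (the one established via Lemma \ref{lem:covarianceMatrix}) asserts precisely that for every $x_1<\cdots<x_n$ the vector $(\tilde y(x_1),\ldots,\tilde y(x_n))$ is multivariate Gaussian with covariance matrix $\mathbf{C}_{(n)}=\bigl[\,2\min(x_i,x_j)^2(1-\max(x_i,x_j))^2\,\bigr]_{i,j=1}^n$. By the characterization above this already means $\{\tilde y(x)\}$ is a Gaussian process, and reading off the $(i,j)$ entry of $\mathbf{C}_{(n)}$ identifies the covariance function as $C(x_i,x_j)=2\min(x_i,x_j)^2(1-\max(x_i,x_j))^2$. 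As consistency checks I would confirm that the diagonal $C(x,x)=2x^2(1-x)^2$ matches the single-point variance $\sigma^2(x)$ of Proposition \ref{prop:pde2}, that $C(x_1,x_2)=2x_1^2(1-x_2)^2$ for $x_1<x_2$ matches the large-$N$ limit $\tfrac1N Cov[H(Nx_1),H(Nx_2)]\to 2x_1^2(1-x_2)^2$ of formula (\ref{eq:covariance}) at $Y_0=0$, and that the resulting family of Gaussian laws is Kolmogorov-consistent — automatic, since they are limits of genuine marginals of $H$.

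The only genuinely computational content lies in the earlier results and is not repeated: the characteristic-curve solution in Proposition \ref{prop:pde1}, the Gaussian-ansatz verification in Proposition \ref{prop:pde2}, and — the real calculation — matching the conditional Gaussian kernel against the quadratic form $\bigl(\tilde y_{n-1},\tilde y_n\bigr)\,\mathbf{M}\,\bigl(\tilde y_{n-1},\tilde y_n\bigr)^T$ coming from the block-inverse formula of Lemma \ref{lem:covarianceMatrix}. The one point I would flag rather than fill is the rigor of the scaling limit itself: the above establishes convergence of all finite-dimensional distributions, but to speak of a bona fide Gaussian \emph{process} with continuous sample paths on $[0,1]$ one would also want tightness of the laws of $x\mapsto\tfrac1{\sqrt N}\bigl(H(Nx)-Nx(1-x)\bigr)$ in $C[0,1]$ or a Skorokhod space, which the master-equation computation does not supply and which would require a separate martingale or moment-bound argument.
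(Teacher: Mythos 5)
Your proposal matches the paper's own treatment: the theorem is stated there precisely as a packaging of Proposition \ref{prop:pde1}, Proposition \ref{prop:pde2}, and the finite-dimensional joint-distribution proposition built on Lemma \ref{lem:covarianceMatrix}, with no additional argument supplied. Your extra remark that tightness in $C[0,1]$ would be needed for a genuine process-level statement is a fair caveat that the paper itself does not address.
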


\begin{cor}
As $N\to \infty$, the (appropriately scaled) volume  of an oscillating tableaux from the empty partition to the empty partition of length $N$ is Gaussian distributed with mean $\frac{1}{6}$ and variance $\frac{1}{45}$.
\end{cor}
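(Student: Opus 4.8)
The plan is to read this off from the Gaussian process limit in the theorem just proved, together with the exact second‑moment formulas already in hand. Take $Y_0=0$, write $Y_i=H(i)$, and recall $V=\sum_{i=0}^N Y_i$. Since $H(N)=0$ one has the exact identity
\[
\frac{V-\mathbb{E}[V]}{N^{3/2}} \;=\; \int_0^1 \sqrt{N}\,\Bigl(\tfrac1N H(\lfloor Nx\rfloor) - \mathbb{E}\bigl[\tfrac1N H(\lfloor Nx\rfloor)\bigr]\Bigr)\,dx,
\]
because $\int_0^1 \tfrac1N H(\lfloor Nx\rfloor)\,dx = \tfrac{1}{N^2}\sum_{i=0}^{N-1}H(i)$ and similarly for the mean. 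By the theorem the integrand converges, as a process in $x$, to the centered Gaussian process $\tilde y(x)$ with covariance $C(x,x')=2\min(x,x')^2(1-\max(x,x'))^2$, so one expects the right‑hand side to converge in distribution to $\int_0^1 \tilde y(x)\,dx$, which is a centered Gaussian variable (being an $L^2$‑limit of finite linear combinations of the jointly Gaussian $\tilde y(x_j)$). Thus the statement reduces to evaluating the variance of $\int_0^1 \tilde y(x)\,dx$.

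That variance is computed directly from $C$: by symmetry in $(x,x')$,
\[
\int_0^1\!\!\int_0^1 C(x,x')\,dx\,dx' \;=\; 4\int_0^1 (1-x')^2\!\left(\int_0^{x'} x^2\,dx\right)dx' \;=\; \frac{4}{3}\int_0^1 x'^3(1-x')^2\,dx' \;=\; \frac{4}{3}\cdot\frac{1}{60} \;=\; \frac{1}{45},
\]
using $\int_0^1 x^3(1-x)^2\,dx=\tfrac14-\tfrac25+\tfrac16=\tfrac1{60}$. The mean likewise comes out as $\int_0^1 x(1-x)\,dx=\tfrac16$, reflecting $V\approx N^2\!\int_0^1 x(1-x)\,dx$ to leading order. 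These agree with the exact formulas $\mathbb{E}[V]=N(N+1)/6$ and $\mathrm{Var}[V]=(N+1)N(N-2)/45\sim N^3/45$ from the earlier corollary, and since $\mathbb{E}[V]=N^2/6+O(N)$, recentering at $N^2/6$ does not change the limit.

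To make the convergence in the first paragraph rigorous I would approximate $\int_0^1 \tilde y$ by Riemann sums. For fixed $m$, set $S_N^{(m)}:=\frac1m\sum_{j=1}^m \sqrt{N}\bigl(\tfrac1N H(\lfloor Nj/m\rfloor)-\tfrac{j}{m}(1-\tfrac{j}{m})\bigr)$; by the theorem (and Propositions \ref{prop:pde1}, \ref{prop:pde2} and the joint‑distribution proposition underlying it) $S_N^{(m)}$ converges as $N\to\infty$ to the mean‑zero Gaussian $S^{(m)}:=\frac1m\sum_j \tilde y(j/m)$. One then bounds $\mathbb{E}\bigl[(W_N-S_N^{(m)})^2\bigr]$, where $W_N=(V-N^2/6)/N^{3/2}$, using the explicit two‑point function (\ref{eq:covariance}); since $C$ is continuous this error is $O(1/m)$ uniformly in $N$ for $N$ large. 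As an $L^2$‑limit of Gaussians is Gaussian and $\mathrm{Var}(S^{(m)})\to 1/45$, letting $m\to\infty$ after $N\to\infty$ gives $W_N\Rightarrow \mathcal{N}(0,1/45)$, which is exactly the asserted asymptotic normality of the volume (mean $N^2/6$, i.e.\ $1/6$ after dividing by $N^2$, and variance $N^3/45$).

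The main obstacle is precisely this last step: upgrading finite‑dimensional Gaussian convergence of the rescaled walk to distributional convergence of the continuous linear functional $V/N^{3/2}$. This is a standard tightness/approximation argument, and it is unusually painless here because the covariance kernel is known in closed form, so the uniform $L^2$ control of the Riemann‑sum error is immediate. An alternative that sidesteps process‑level statements entirely is the method of moments: using the recursive mixed‑moment formula one can show directly that every moment of $W_N$ converges to the corresponding moment of $\mathcal{N}(0,1/45)$, at the cost of a longer computation.
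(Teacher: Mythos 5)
Your proposal is correct and follows the route the paper clearly intends: the paper gives no proof of this corollary at all, presenting it as an immediate consequence of the Gaussian-process theorem together with the earlier exact formulas $\mathbb{E}[V]=N(N+1)/6$ and $\mathrm{Var}[V]=(N+1)N(N-2)/45$, and your computation $\int_0^1\int_0^1 C(x,x')\,dx\,dx' = \tfrac{4}{3}\int_0^1 x^3(1-x)^2\,dx = \tfrac{1}{45}$ and $\int_0^1 x(1-x)\,dx=\tfrac16$ matches those. The one genuine point you add beyond the paper is the Riemann-sum $L^2$ argument upgrading finite-dimensional Gaussian convergence to convergence of the integral functional $V/N^{3/2}$ — a gap the paper silently skips — and your way of closing it (uniform error control via the explicit finite-$N$ covariance (\ref{eq:covariance})) is sound.
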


\section{Variational Principle}  \label{sec:4}
The variational principle is a common tool in the study of random walk (for a recent example see \cite{DGR}). It is well-known that for a random walk from the origin to $(N,M)$ on $\mathbb{Z}^2$ consisting of elementary steps $(1,1)$  and $(1,-1)$ with \emph{constant} weight $p$ and $q$, respectively, that the number of paths $Z$ grows like
\[
Z \sim e^{N\sigma(v)}
\]
where $N$ is the length of the walk and $M/N \to v$ as $N$ becomes large. Here $\sigma$ is known as the surface tension and is given explicitly by
\begin{equation}\label{eq:surfaceTension}
\sigma(v) = \frac{1+v}{2} \log(p) + \frac{1-v}{2} \log(q) - \frac{1+v}{2} \log\left(\frac{1+v}{2}\right) - \frac{1-v}{2} \log\left(\frac{1-v}{2}\right).
\end{equation}
Using this one would consider the action on trajectories $\varphi$ given by
\begin{equation}
S[\varphi(x)] = \int \; \sigma(\partial_x \varphi) \; dx
\end{equation}
whose extremum $\varphi_0$ would correspond to the expected trajectory of the walk. Looking at the second order term in the expansion of the action around $\varphi_0$ gives an effective action describing the behavior of the fluctuations of the walk. 

Although surface tension given above is only valid for constant weights $p,q$, it is reasonable to expect that replacing $p$ and $q$ with position dependent weights $p(x,y),q(x,y)$ will only add correction at sub-leading order if the weights are sufficiently smooth. Regardless, we conjecture that the surface tension is still valid with $p(x,y) = 1-\frac{y}{1-x}$ and $q(x,y) = \frac{y}{1-x}$. 

Let $h(x)$ be a continuum trajectory of our random walk. Without loss of generality, we restrict ourselves to paths with $h(0)=h(1)=0$ (the first condition is the statement $y_0=0$, and the second is harmless as $\mathbb{P}[h(1)=0]=1$). As we will be repeatedly integrating by parts, this restriction allows us to discard the resulting boundary terms. Under these conditions, we have
\begin{prop}
The action is extremized by the trajectory $h(x)$ solving
\begin{equation}
2hh''-(h')^2+1=0
\end{equation}
with $h(0)=h(1)=0$.
\end{prop}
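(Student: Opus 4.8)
The plan is to treat this as a standard calculus-of-variations problem. Write $L(x,h,h')$ for the integrand of the action, i.e.\ the surface tension of (\ref{eq:surfaceTension}) with $v=h'$ and with the position-dependent weights $p=p(x,h)=1-\frac{h}{1-x}$ and $q=q(x,h)=\frac{h}{1-x}$ substituted in. An extremal of $S[\varphi]=\int_0^1 L\,dx$ satisfies the Euler--Lagrange equation $\partial_h L=\frac{d}{dx}\partial_{h'}L$, and the imposed conditions $h(0)=h(1)=0$ are exactly what is needed to discard the boundary terms produced by the variation. So the proof reduces to computing the two sides of this equation and simplifying.

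First I would compute $\partial_{h'}L$: only the explicit $v=h'$ dependence contributes, and when differentiating the entropy terms $-\frac{1+v}{2}\log\frac{1+v}{2}-\frac{1-v}{2}\log\frac{1-v}{2}$ the $\pm\frac12$ pieces cancel, leaving $\partial_{h'}L=\frac12\log\frac{p(1-h')}{q(1+h')}$. Next, $\partial_h L=\frac{1+h'}{2}\frac{\partial_h p}{p}+\frac{1-h'}{2}\frac{\partial_h q}{q}$; since $\partial_h p=-\frac1{1-x}$ and $\partial_h q=\frac1{1-x}$, it is convenient to set $a=\frac{h}{1-x}$, so that $p=1-a$, $q=a$, and then a short computation gives $\partial_h L=\frac{1-2a-h'}{2(1-x)\,a(1-a)}$. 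Finally I would differentiate $\partial_{h'}L$ along the trajectory using $\frac{d}{dx}\log p=\frac{\partial_x p+h'\partial_h p}{p}$, the analogous formula for $\log q$, and $\frac{d}{dx}\log(1\pm h')=\pm\frac{h''}{1\pm h'}$; noting that $\dot a=\frac{h'+a}{1-x}$ this yields $\frac{d}{dx}\partial_{h'}L=-\frac{h'+a}{2(1-x)\,a(1-a)}-\frac{h''}{1-(h')^2}$.

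Subtracting, the two terms rational in $a$ combine to $\frac{(1-2a-h')+(h'+a)}{2(1-x)\,a(1-a)}=\frac{1-a}{2(1-x)\,a(1-a)}=\frac{1}{2(1-x)a}=\frac1{2h}$, using $(1-x)a=h$. Hence the Euler--Lagrange equation collapses to $\frac1{2h}+\frac{h''}{1-(h')^2}=0$, which clears denominators to $2hh''-(h')^2+1=0$, as claimed. (As a consistency check, the mean trajectory $h_0(x)=x(1-x)$ of Proposition~\ref{prop:pde1} solves this, since $2\,x(1-x)(-2)-(1-2x)^2+1=0$.)

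The computation is elementary from start to finish; the only place requiring care is the final simplification, and the device that makes it transparent is to pass to the variable $a=h/(1-x)$ — precisely the combination in which the weights are expressed — from the outset. I do not anticipate a genuine obstacle: the content of the statement is just that this choice of surface tension together with these particular position-dependent weights conspire so that the Euler--Lagrange equation telescopes down to the stated second-order ODE.
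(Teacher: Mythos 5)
Your proposal is correct and follows essentially the same route as the paper: both derive the Euler--Lagrange equation for the surface tension with the position-dependent weights substituted in, and your $\tfrac12\log\frac{p(1-h')}{q(1+h')}$ is exactly the paper's $\tanh^{-1}\!\left(1-\frac{2h}{1-x}\right)-\tanh^{-1}(h')$ (printed there as $\tan^{-1}$, evidently a typo), while your $\partial_h L$ agrees with the paper's zeroth-order coefficient. The only difference is cosmetic: you invoke the standard Euler--Lagrange formula directly and carry out the final simplification via the substitution $a=h/(1-x)$, whereas the paper expands $S[h+\epsilon h_1]$ explicitly and leaves the last simplification to the reader.
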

\begin{proof}
Consider the trajectory $h(x) + \epsilon h_1(x)$ where $\epsilon$ is some small parameter and $h_1(0)=h_1(1)=0$. Expanding the action in powers of $\epsilon$ a standard computation gives
{\tiny
\begin{align*}
S[& h(x) +  \epsilon h_1(x)]-S[h(x)] = \\
= &  \epsilon \int_0^1 \; \left(\left( \tan^{-1}\left(1-\frac{2 h(x)}{1-x}\right) - \tan^{-1}(h'(x)) \right) h_1'(x)  - \frac{2h(x)-(1-x)(1-h'(x))}{2h(x)(1-x-h(x))}h_1(x) \right) \; dx \\
+ & \frac{\epsilon^2}{2} \int_0^1 \; \left(\frac{1}{2}\left(\frac{1+h'(x)}{(1-x-h(x)^2)} -\frac{1-h'(x)}{h(x)^2}\right)h_1(x)^2-\frac{1-x}{h(x)(1-x-h(x))}h_1(x)h_1'(x) - \frac{1}{1-h'(x)^2}h_1'(x)^2\right) \; dx.
\end{align*} 
}
We want to find $h(x)$ such that the first-order term vanishes for all $h_1$. After an integration by parts, this condition gives the Euler-Lagrange equation
\[
\frac{d}{dx}\left( \tan^{-1}\left(1-\frac{2 h(x)}{1-x}\right) - \tan^{-1}(h'(x)) \right) + \frac{2h(x)-(1-x)(1-h'(x))}{2h(x)(1-x-h(x))} = 0
\]
which simplifies to 
\begin{equation}
2hh''-(h')^2+1=0.
\end{equation}
\end{proof}

Note that the positive solution to this ODE satisfying the boundary conditions $h(0)=h(1)=0$ is $h_0(x)=x(1-x)$, in agreement with our previous results. Expanding the action to second order about $h_0$ gives the following proposition.
\begin{prop}
The second variation of the action is given by
\begin{equation}
\delta S [h_1(x)] =-\frac{1}{2} \int_0^1 \;  h_1 \Delta h_1 \; dx
\end{equation}
where $\Delta$ is a massive Laplace-type differential operator,
\begin{equation}
\Delta = -\frac{1}{2x(1-x)} \frac{d^2}{dx^2} +\frac{1-2x}{2x^2(1-x)^2}\frac{d}{dx} + \frac{1}{x^2(1-x)^2}.
\end{equation}
\end{prop}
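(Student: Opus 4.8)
The plan is to start from the order-$\epsilon^2$ term in the expansion of the action about $h_0$ that appears in the previous proposition, specialize every coefficient to the extremal trajectory $h_0(x)=x(1-x)$, and then integrate by parts until both derivatives are collected onto one factor of $h_1$.

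First I would record the identities valid along $h_0$: $h_0'=1-2x$, $1+h_0'=2(1-x)$, $1-h_0'=2x$, $1-x-h_0=(1-x)^2$, $h_0^2=x^2(1-x)^2$, and $1-(h_0')^2=4x(1-x)$. Feeding these into the three coefficients of the second-order term — of $h_1^2$, of $h_1h_1'$, and of $(h_1')^2$ — collapses each of them to a simple rational function of $x$ alone: the coefficient of $(h_1')^2$ becomes $-\tfrac{1}{4x(1-x)}$, the coefficient of $h_1h_1'$ becomes $-\tfrac{1}{x(1-x)^2}$, and the two pieces of the $h_1^2$ coefficient combine over the common denominator $x(1-x)^3$. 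This is the one place where the special structure of $h_0$ (equivalently the ODE $2h_0h_0''-(h_0')^2+1=0$) is really used; once the coefficients are this clean the rest is mechanical.

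Next I would eliminate the cross term: writing $h_1h_1'=\tfrac12(h_1^2)'$ and integrating by parts, the boundary contribution is discarded as in the convention fixed just before the first-variation proposition, and $-\tfrac{1}{x(1-x)^2}h_1h_1'$ is traded for the contribution $-\tfrac12\frac{d}{dx}\!\big(-\tfrac{1}{x(1-x)^2}\big)$ to the coefficient of $h_1^2$. A short rational-function cancellation (the $1/(x(1-x)^3)$ terms must cancel exactly) then leaves
\[
\delta S[h_1]=\int_0^1\Big(-\tfrac{1}{2x^2(1-x)^2}\,h_1^2-\tfrac{1}{4x(1-x)}\,(h_1')^2\Big)\,dx.
\]
Finally I would observe that the operator $\Delta$ in the statement is formally self-adjoint: expanding the derivative shows
\[
\Delta=-\frac{d}{dx}\!\left(\frac{1}{2x(1-x)}\frac{d}{dx}\right)+\frac{1}{x^2(1-x)^2},
\]
which reproduces the three displayed terms. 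Integrating by parts once more and again discarding the endpoint terms gives $-\tfrac12\int_0^1 h_1\,\Delta h_1\,dx=-\tfrac12\int_0^1\big(\tfrac{1}{2x(1-x)}(h_1')^2+\tfrac{1}{x^2(1-x)^2}h_1^2\big)\,dx$, which matches the previous display term by term, proving the proposition.

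I expect the main obstacle to be nothing more than the bookkeeping in the middle step: keeping the signs straight while specializing the three coefficients to $h_0$ and verifying that the $1/(x(1-x)^3)$ contributions cancel after the first integration by parts. One secondary point deserves a sentence: the weight $\tfrac{1}{x(1-x)}$ is singular at $x=0,1$, so the quadratic form and the integrations by parts are really only justified on a class of fluctuations decaying fast enough at the endpoints — in line with, though slightly stronger than, the standing hypothesis $h_1(0)=h_1(1)=0$.
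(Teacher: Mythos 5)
Your proof is correct and reaches the stated operator, but it takes a slightly different computational route from the paper's. The paper first packages the second-order term as the perfect square $-\tfrac12\int_0^1 \frac{(2h_1+(1-x)h_1')^2}{4x(1-x)^3}\,dx$ and then splits that square into an $h_1(\cdots)$ piece and an $h_1'(\cdots)$ piece, integrating by parts once to land directly on the non-symmetric-looking expression for $\Delta$. You instead diagonalize the quadratic form first, killing the cross term $h_1h_1'$ by a single integration by parts to get $-\int_0^1\bigl(\tfrac{1}{2x^2(1-x)^2}h_1^2+\tfrac{1}{4x(1-x)}(h_1')^2\bigr)dx$, and then recognize $\Delta$ in Sturm--Liouville form $-\frac{d}{dx}\bigl(\frac{1}{2x(1-x)}\frac{d}{dx}\bigr)+\frac{1}{x^2(1-x)^2}$. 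Your version buys something the paper's does not: it makes the formal self-adjointness of $\Delta$ and the negative-definiteness of the second variation manifest, and your closing remark about the singular weight at $x=0,1$ requiring sufficient decay of $h_1$ is a legitimate point the paper passes over. One caution on the input: the paper's displayed coefficient of $h_1^2$ in the second-order expansion contains a sign/typo error (as written it would give $\frac{2x-1}{x(1-x)^3}$ at $h_0$); the correct value, $-\frac{1+h_0'}{2(1-x-h_0)^2}-\frac{1-h_0'}{2h_0^2}=-\frac{1}{x(1-x)^3}$, is the one your cancellation of the $1/(x(1-x)^3)$ terms silently requires, so you should state it explicitly.

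One genuine normalization wrinkle to flag, which you inherit rather than introduce: if $\delta S$ denotes the coefficient of $\epsilon^2$ in $S[h_0+\epsilon h_1]-S[h_0]$ (so that the Taylor factor $\tfrac12$ is included, as in the paper's perfect-square display), then your diagonal form should carry an extra overall $\tfrac12$, and the result is $-\tfrac14\int_0^1 h_1\Delta h_1\,dx$ rather than $-\tfrac12\int_0^1 h_1\Delta h_1\,dx$. The paper's own proof has the same factor-of-two slippage (its intermediate square form equals $-\tfrac14\int h_1\Delta h_1$, and correspondingly $2x_1^2(1-x_2)^2$ is the Green's function of $\tfrac12\Delta$, not of $\Delta$, which is what reconciles it with the covariance computed earlier). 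Worth a sentence in your write-up so the reader can track which convention is in force.
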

\begin{proof}
 We have already calculated the expansion of the action to second-order above. With $h(x)$ replace with $h_0(x)$, we'll call this term $ \delta S[h_1(x)]$. After some simplification this becomes
 \[
 \delta S[h_1(x)] = -\frac{1}{2} \int_0^1 \; \frac{(2h_1(x)+(1-x)h_1'(x))^2}{4x(1-x)^3} \; dx.
 \]
 To get it in the desired form, we expand the integrand as
 {\tiny
 \[
 \frac{(2h_1(x)+(1-x)h_1'(x))^2}{4x(1-x)^3} = h_1(x) \frac{2h_1(x)+(1-x)h_1'(x)}{2x(1-x)^3} + h_1'(x)\frac{2(1-x)h_1(x)+(1-x)^2h_1'(x)}{4x(1-x)^3},
 \]
 }
then integrating by parts on the second term, we can move the derivative from the leading $h_1'(x)$. Doing so and simplifying leaves us with
\[
h_1(x) \left(-\frac{1}{2x(1-x)} h_1''(x) +\frac{1-2x}{2x^2(1-x)^2}h_1'(x)+ \frac{1}{x^2(1-x)^2}h_1(x)\right)
\]
 as desired.
 
\end{proof}

One can easily check that the Greens function for this operator $\Delta$ is given by
\[
G(x_1,x_2) = 2x_1^2(1-x_2)^2
\]
where $x_1\le x_2$. Note this is precisely the covariance we calculated earlier!

\section{Further Study}

Here we found that the area of the partitions of an oscillating tableaux of length $N$ and shape $\lambda$ could be modeled as random walk starting at height $|\lambda|$ with certain position-dependent weights. These weights came from viewing the oscillating tableaux as a sequence of competing hook walks. It would be interesting to study other sets of weights interpolating between the these basic hook walks. For example, consider the weights $p_{remove} = \frac{1-q^{Y}}{1-q^{N-X}}$ and $p_{add} = 1 - \frac{1-q^{Y}}{1-q^{N-X}}$. As $q\to 1$ we recover the oscillating tableaux, and as $q\to0$ we have that $q_{remove}\to 1$. Note that the $q\to 0$ limit is related to the hook-walk method for calculation the number of Standard Young Tableaux \cite{GNW1,GNW2}. See figure \ref{fig:qWeights}.

\begin{figure}
\includegraphics[width=0.5\linewidth]{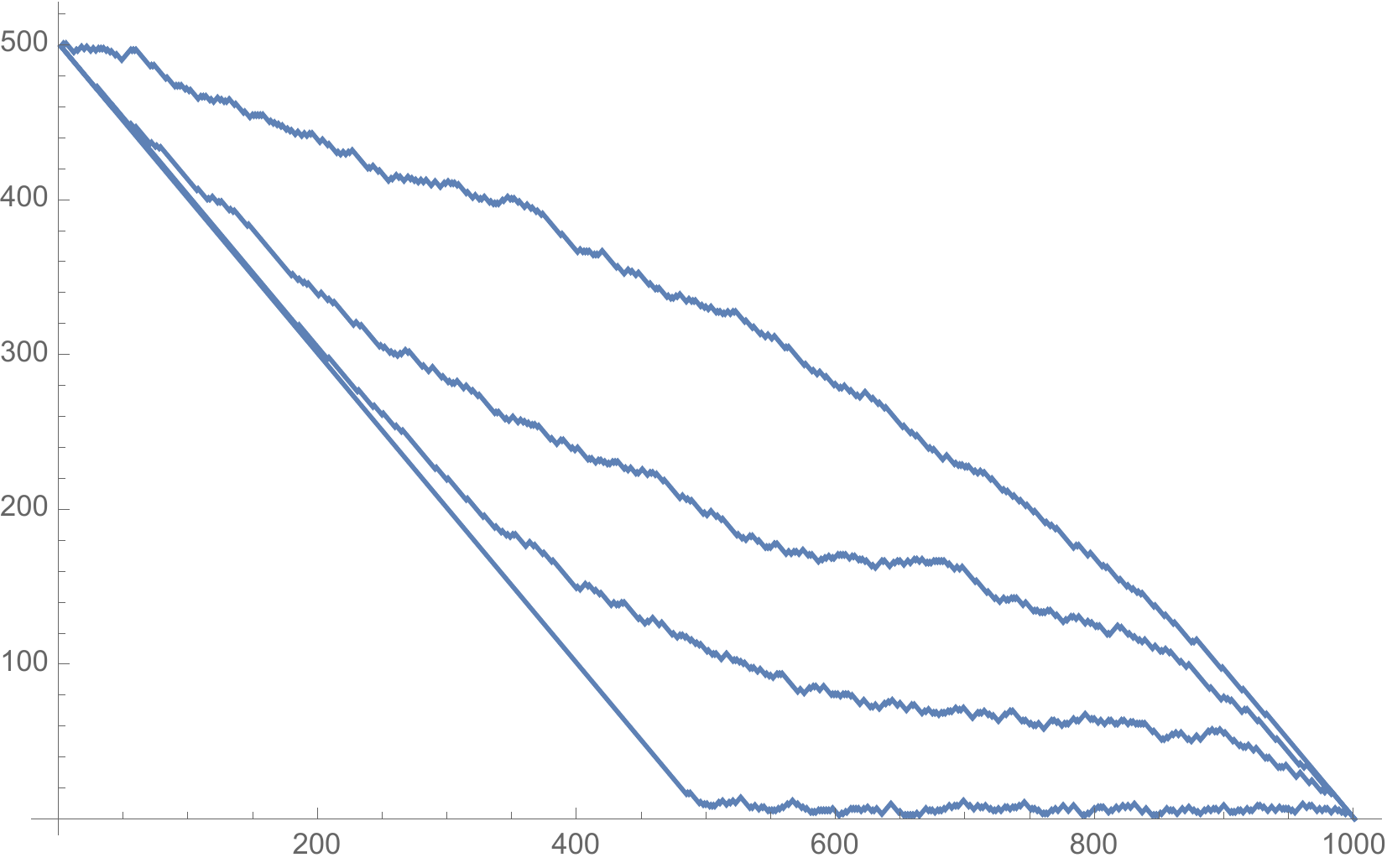}
\caption{Random walk of length $1000$ and $Y_0=500$ with weights given by $p_{remove} = \frac{1-q^{Y}}{1-q^{N-X}}$ and $p_{add} = 1 - \frac{1-q^{Y}}{1-q^{N-X}}$ for a variety of $q$. From ``highest" to ``lowest": $q = 0.999, 0.995, 0.99, 0.9$.}
\label{fig:qWeights}
\end{figure}
 
 In \cite{BO} the authors study Markov processes on the set of Young diagrams with more general jump rates than those discussed here. They show that the correlation functions between the length of rows of the partitions at different times forms a determinantal process, compute its kernel, and study its asymptotic behavior. It would be interesting to check of the the probabilities $p_{add}$, $p_{remove}$ given here fit into the framework given in \cite{BO}.

\end{document}